\begin{document}

\restylefloat{table}
\newtheorem{thm}[equation]{Theorem}
\numberwithin{equation}{section}
\newtheorem{cor}[equation]{Corollary}
\newtheorem{expl}[equation]{Example}
\newtheorem{rmk}[equation]{Remark}
\newtheorem{conv}[equation]{Convention}
\newtheorem{claim}[equation]{Claim}
\newtheorem{lem}[equation]{Lemma}
\newtheorem{sublem}[equation]{Sublemma}
\newtheorem{conj}[equation]{Conjecture}
\newtheorem{defin}[equation]{Definition}
\newtheorem{diag}[equation]{Diagram}
\newtheorem{prop}[equation]{Proposition}
\newtheorem{notation}[equation]{Notation}
\newtheorem{tab}[equation]{Table}
\newtheorem{fig}[equation]{Figure}
\newcounter{bean}
\renewcommand{\theequation}{\thesection.\arabic{equation}}

\raggedbottom \voffset=-.7truein \hoffset=0truein \vsize=8truein
\hsize=6truein \textheight=8truein \textwidth=6truein
\baselineskip=18truept
\def\mapleft#1{\smash{\mathop{\longleftarrow}\limits^{#1}}}
\def\mapright#1{\ \smash{\mathop{\longrightarrow}\limits^{#1}}\ }
\def\ml#1{\,\smash{\mathop{\leftarrow}\limits^{#1}}\,}
\def\mapup#1{\Big\uparrow\rlap{$\vcenter {\hbox {$#1$}}$}}
\def\mapdown#1{\Big\downarrow\rlap{$\vcenter {\hbox {$\ssize{#1}$}}$}}
\def\mapne#1{\nearrow\rlap{$\vcenter {\hbox {$#1$}}$}}
\def\mapse#1{\searrow\rlap{$\vcenter {\hbox {$\ssize{#1}$}}$}}
\def\mapr#1{\smash{\mathop{\rightarrow}\limits^{#1}}}
\def\Mt{\widetilde{\M}}
\def\ss{\smallskip}
\def\s{\sigma}
\def\Bt{\widetilde{\mathcal{B}}}
\def\l{\lambda}
\def\Ah{\widehat{A}}
\def\Bh{\widehat{B}}
\def\vp{v_1^{-1}\pi}
\def\at{{\widetilde\alpha}}
\def\At{\widetilde{\mathcal{A}}}
\def\as{\mathscr{A}}
\def\Ast{\widetilde{\as}}
\def\Mct{\widetilde{\mathcal{M}}}
\def\sm{\wedge}
\def\la{\langle}
\def\ra{\rangle}
\def\lar{\leftarrow}
\def\ev{\text{ev}}
\def\od{\text{od}}
\def\on{\operatorname}
\def\ol#1{\overline{#1}{}}
\def\spin{\on{Spin}}
\def\cat{\on{cat}}
\def\Lbar{\overline{\Lambda}}
\def\qed{\quad\rule{8pt}{8pt}\bigskip}
\def\ssize{\scriptstyle}
\def\a{\alpha}
\def\bz{{\Bbb Z}}
\def\Rhat{\hat{R}}
\def\im{\on{im}}
\def\ct{\widetilde{C}}
\def\ext{\on{Ext}}
\def\sq{\on{Sq}}
\def\eps{\epsilon}
\def\ar#1{\stackrel {#1}{\rightarrow}}
\def\br{{\bold R}}
\def\bC{{\bold C}}
\def\bA{{\bold A}}
\def\bB{{\bold B}}
\def\bD{{\bold D}}
\def\bC{{\bold C}}
\def\bh{{\bold H}}
\def\bQ{{\bold Q}}
\def\bP{{\bold P}}
\def\bx{{\bold x}}
\def\bo{{\bold{bo}}}
\def\dh{\widehat{d}}
\def\A{\mathcal{A}}
\def\B{\mathcal{B}}
\def\si{\sigma}
\def\Vbar{{\overline V}}
\def\dbar{{\overline d}}
\def\wbar{{\overline w}}
\def\Sum{\sum}
\def\tfrac{\textstyle\frac}

\def\tb{\textstyle\binom}
\def\Si{\Sigma}
\def\w{\wedge}
\def\equ{\begin{equation}}
\def\b{\beta}
\def\G{\Gamma}
\def\L{\Lambda}
\def\g{\gamma}
\def\d{\delta}
\def\k{\kappa}
\def\psit{\widetilde{\Psi}}
\def\tht{\widetilde{\Theta}}
\def\psiu{{\underline{\Psi}}}
\def\thu{{\underline{\Theta}}}
\def\aee{A_{\text{ee}}}
\def\aeo{A_{\text{eo}}}
\def\aoo{A_{\text{oo}}}
\def\aoe{A_{\text{oe}}}
\def\vbar{{\overline v}}
\def\endeq{\end{equation}}
\def\xhat{\widehat{x}}
\def\sn{S^{2n+1}}
\def\zp{\bold Z_p}
\def\cR{{\mathcal R}}
\def\P{{\mathcal P}}
\def\cQ{{\mathcal Q}}
\def\cj{{\cal J}}
\def\zt{{\bold Z}_2}
\def\bs{{\bold s}}
\def\bof{{\bold f}}
\def\bq{{\bold Q}}
\def\be{{\bold e}}
\def\Hom{\on{Hom}}
\def\ker{\on{ker}}
\def\kot{\widetilde{KO}}
\def\coker{\on{coker}}
\def\da{\downarrow}
\def\colim{\operatornamewithlimits{colim}}
\def\zphat{\bz_2^\wedge}
\def\io{\iota}
\def\om{\omega}
\def\Prod{\prod}
\def\e{{\cal E}}
\def\zlt{\Z_{(2)}}
\def\exp{\on{exp}}
\def\abar{{\overline a}}
\def\xbar{{\overline x}}
\def\ybar{{\overline y}}
\def\zbar{{\overline z}}
\def\mbar{{\overline m}}
\def\nbar{{\overline n}}
\def\sbar{{\overline s}}
\def\kbar{{\overline k}}
\def\bbar{{\overline b}}
\def\et{{\widetilde E}}
\def\ni{\noindent}
\def\tsum{\textstyle \sum}
\def\coef{\on{coef}}
\def\den{\on{den}}
\def\lcm{\on{l.c.m.}}
\def\Ext{\operatorname{Ext}}
\def\iso{\approx}
\def\lra{\longrightarrow}
\def\vi{v_1^{-1}}
\def\ot{\otimes}
\def\psibar{{\overline\psi}}
\def\thbar{{\overline\theta}}
\def\Mh{{\widehat M}}
\def\exc{\on{exc}}
\def\ms{\medskip}
\def\ehat{{\hat e}}
\def\etao{{\eta_{\text{od}}}}
\def\etae{{\eta_{\text{ev}}}}
\def\dirlim{\operatornamewithlimits{dirlim}}
\def\gt{\widetilde{L}}
\def\lt{\widetilde{\lambda}}
\def\st{\widetilde{s}}
\def\ft{\widetilde{f}}
\def\sgd{\on{sgd}}
\def\lfl{\lfloor}
\def\rfl{\rfloor}
\def\ord{\on{ord}}
\def\gd{{\on{gd}}}
\def\rk{{{\on{rk}}_2}}
\def\nbar{{\overline{n}}}
\def\MC{\on{MC}}
\def\lg{{\on{lg}}}
\def\cH{\mathcal{H}}
\def\cS{\mathcal{S}}
\def\cP{\mathcal{P}}
\def\N{{\Bbb N}}
\def\Z{{\Bbb Z}}
\def\Q{{\Bbb Q}}
\def\R{{\Bbb R}}
\def\C{{\Bbb C}}
\def\Lb{\overline\Lambda}
\def\mo{\on{mod}}
\def\xt{\times}
\def\notimm{\not\subseteq}
\def\Remark{\noindent{\it  Remark}}
\def\kut{\widetilde{KU}}
\def\Eb{\overline E}
\def\*#1{\mathbf{#1}}
\def\0{$\*0$}
\def\1{$\*1$}
\def\22{$(\*2,\*2)$}
\def\33{$(\*3,\*3)$}
\def\ss{\smallskip}
\def\ssum{\sum\limits}
\def\dsum{\displaystyle\sum}
\def\la{\langle}
\def\ra{\rangle}
\def\on{\operatorname}
\def\proj{\on{proj}}
\def\od{\text{od}}
\def\ev{\text{ev}}
\def\o{\on{o}}
\def\U{\on{U}}
\def\lg{\on{lg}}
\def\a{\alpha}
\def\bz{{\Bbb Z}}
\def\ccM{{\Bbb M}}
\def\E{\mathcal{E}}
\def\eps{\varepsilon}
\def\bc{{\bold C}}
\def\bN{{\bold N}}
\def\bB{{\bold B}}
\def\bW{{\bold W}}
\def\nut{\widetilde{\nu}}
\def\tfrac{\textstyle\frac}
\def\b{\beta}
\def\G{\Gamma}
\def\g{\gamma}
\def\zt{{\Bbb Z}_2}
\def\zth{{\bold Z}_2^\wedge}
\def\bs{{\bold s}}
\def\bx{{\bold x}}
\def\bof{{\bold f}}
\def\bq{{\bold Q}}
\def\be{{\bold e}}
\def\lline{\rule{.6in}{.6pt}}
\def\xb{{\overline x}}
\def\xbar{{\overline x}}
\def\ybar{{\overline y}}
\def\zbar{{\overline z}}
\def\ebar{{\overline e}}
\def\nbar{{\overline n}}
\def\ubar{{\overline u}}
\def\bbar{{\overline b}}
\def\et{{\widetilde e}}
\def\M{\mathcal{M}}
\def\lf{\lfloor}
\def\rf{\rfloor}
\def\ni{\noindent}
\def\ms{\medskip}
\def\Dhat{{\widehat D}}
\def\what{{\widehat w}}
\def\Yhat{{\widehat Y}}
\def\abar{{\overline{a}}}
\def\minp{\min\nolimits'}
\def\sb{{$\ssize\bullet$}}
\def\mul{\on{mul}}
\def\N{{\Bbb N}}
\def\Z{{\Bbb Z}}
\def\Q{{\Bbb Q}}
\def\R{{\Bbb R}}
\def\C{{\Bbb C}}
\def\Xb{\overline{X}}
\def\eb{\overline{e}}
\def\notint{\cancel\cap}
\def\cS{\mathcal S}
\def\cR{\mathcal R}
\def\el{\ell}
\def\TC{\on{TC}}
\def\GC{\on{GC}}
\def\wgt{\on{wgt}}
\def\Ht{\widetilde{H}}
\def\wbar{\overline w}
\def\dstyle{\displaystyle}
\def\Sq{\on{sq}}
\def\Om{\Omega}
\def\ds{\dstyle}
\def\tz{tikzpicture}
\def\zcl{\on{zcl}}
\def\bd{\bold{d}}
\def\cM{\mathcal{M}}
\def\io{\iota}
\def\od{\operatorname{od}}
\def\odprod{\on{odpr}}
\def\uns{\on{uns}}
\def\stab{\on{stab}}
\def\Vb#1{{\overline{V_{#1}}}}
\def\Ebar{\overline{E}}
\def\lb{\,\begin{picture}(-1,1)(1,-1)\circle*{3.5}\end{picture}\ }
\def\rlb{\,\begin{picture}(-1,1)(1,-1) \circle*{4.5}\end{picture}\ }
\def\lbb{\,\begin{picture}(-1,1)(1,-1)\circle*{8}\end{picture}\ }
\def\zp{\Z_p}
\def\lbr{\,\begin{picture}(-1,1)(1,-1)[dashed]\circle*{3.5}\end{picture}\ }
\def\llb{\,\begin{picture}(-1,1)(1,-1)\circle*{2.6}\end{picture}\ }
\def\blb{\,\begin{picture}(-1,1)(1,-1) \circle*{5.8}\end{picture}\ }
\def\alh{\widehat{\a}}
\def\sihat{\widehat{\sigma}}
\setcounter{MaxMatrixCols}{15}
\title
{Some 2-adic integers related to the odd part of $2^e!$}
\author{Donald M. Davis}
\address{Department of Mathematics, Lehigh University\\Bethlehem, PA 18015, USA}
\email{dmd1@lehigh.edu}
\date{October 24, 2025}
\begin{abstract} The odd part of $2^e!$ as $e\to\infty$ leads to a 2-adic integer $z$. The bits of $z$ were publicized in OEIS-A359349, where two conjectures were made, relevant to computing $z$. We prove both of those conjectures. A second 2-adic integer, the limit of $((2^e-1)!!-1)/2^e$, plays a key role in one proof.\end{abstract}
\keywords{2-adic integers, 2-powers, factorials}
\thanks {2000 {\it Mathematics Subject Classification}: 11A15, 11E95, 11B50.}
\maketitle
\section{Introduction}\label{intro}
In \cite{MAA}, the author noted that the odd part of $2^e!$ and of $2^{e-1}!$ agree mod $2^e$, and so the 2-adic limit as $e$ approaches $\infty$ is a 2-adic integer, which we will call $z$. In OEIS-A359349(\cite{OEIS}), the author and Jon E. Schoenfield publicized the sequence of bits of $z$ and made two conjectures. One involved a relationship between the bits of $z$ and some of the unstable bits in the odd part of $2^e!$, while the other leads to a more efficient way of computing $z$. In this paper we prove both conjectures and some generalizations.

In this introductory section, we review the two conjectures, stating them as theorems.
In Sections \ref{sec2} and \ref{sec3}, we prove generalizations of both.

Let $\nu(n)$ denote the exponent of 2 in the prime factorization of $n$, and $\od(n)=2/2^{\nu(n)}$ the odd part of $n$. Then $\od(2^e!)=2^e!/2^{2^e-1}$. In Figure \ref{table} we tabulate the first 40 bits in the backward binary expansion (BBE) of $\od(2^e!)$ for $2\le e\le30$. In \cite{OEIS}, a larger table (64 bits for $e\le40$) was presented.

\begin{table}[h]
\caption{first 32 bits in BBE of $\od(2^e!)$}
\label{table}
\begin{tabular}{r|l}
2&110$\ $0000000000000000000000000000000000000\\
3&1101$\ $110010000000000000000000000000000000\\
4&11010$\ $11101110111011100000110010000000000\\
5&110100$\ $1011001110100011000001101010001001\\
6&1101000$\ $000000001101000110100010110011110\\
7&11010001$\ $10010101001010001010001101011100\\
8&110100010$\ $0111100111001000110000010011101\\
9&1101000101$\ $000110101110110000001011000011\\
10&11010001011$\ $10101101100010111001010101001\\
11&110100010110$\ $0010011111110101111010110111\\
12&1101000101101$\ $110111110000001100111011011\\
13&11010001011010$\ $11100001101100011101011000\\
14&110100010110100$\ $1011001111011110110100001\\
15&1101000101101000$\ $000011110100000100110000\\
16&11010001011010001$\ $10000001011001001111011\\
17&110100010110100010$\ $0101001000011101001001\\
18&1101000101101000101$\ $011101001000000000010\\
19&11010001011010001011$\ $00000101101001010101\\
20&110100010110100010111$\ $1001101111101111101\\
21&1101000101101000101110$\ $011011110000010111\\
22&11010001011010001011101$\ $00100001101011100\\
23&110100010110100010111011$\ $1101001111111011\\
24&1101000101101000101110110$\ $111101110010100\\
25&11010001011010001011101100$\ $10000001111101\\
26&110100010110100010111011000$\ $0101001100111\\
27&1101000101101000101110110001$\ $011101101100\\
28&11010001011010001011101100011$\ $00000011011\\
29&110100010110100010111011000111$\ $1001011000\\
30&1101000101101000101110110001110$\ $011111001
\end{tabular}
\end{table}

Bits $0$ through $e$ of $\od(2^e!)$ are {\it stable}; they agree with those of $z$. The {\it unstable} bits of $\od(2^e!)$ are those in position $\ge e+1$; they appear to the right of  the space in Figure \ref{table}. Note that, for $e>d$, the first $d$ unstable bits of $\od(2^e!)$ occur in the same positions as the last $d$ stable bits of $\od(2^{e+d}!)$. The latter are the stable bits of $z$ in position $e+1$ through $e+d$. Let $\uns(e,d)$ and $\stab(e+1,d)$ denote the numbers whose BBE's are these sequences of $d$ bits. The following theorem was the first conjecture of \cite{OEIS}.

\begin{thm}\label{thm1} There is a 2-adic integer $K$ such that, for all $d$ and $e>d$
$$\uns(e,d)+K\equiv \stab(e+1,d).$$
\end{thm}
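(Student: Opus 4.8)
The plan is to reduce the problem to a statement about double factorials. For $l\ge1$ let $P_l=(2^l-1)!!$ be the product of the odd numbers below $2^l$; partitioning $\{1,\dots,2^e\}$ according to $2$-adic valuation shows $f(e):=\od(2^e!)=\prod_{l=1}^{e}P_l$, so $f(e)=f(e-1)P_e$. The stability recalled in the introduction says $f(e)\equiv z\pmod{2^{e+1}}$ for $e\ge2$; hence for $l\ge3$ we get $P_l=f(l)/f(l-1)\equiv1\pmod{2^l}$, so $g(l):=(P_l-1)/2^l$ is an integer. I will prove that $(g(l))_{l\ge3}$ converges $2$-adically to some $G\in\Z_2$ and that the constant $K=-zG$ works.

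For the convergence, the engine is the factorization $P_{l+1}=P_l\cdot\prod_{j\ \mathrm{odd},\,1\le j<2^l}(2^l+j)=P_l^2\,R_l$, where $R_l=\prod_{j\ \mathrm{odd},\,1\le j<2^l}(1+2^l j^{-1})$, the inverses taken in $\Z_2$. Multiplying out, every term other than $1$ and $2^l\sum_j j^{-1}$ carries at least two factors $2^l$, so $R_l\equiv1+2^l\sum_j j^{-1}\pmod{2^{2l}}$; and since inversion permutes the odd residues mod $2^l$, $\sum_j j^{-1}\equiv\sum_j j=2^{2l-2}\equiv0\pmod{2^l}$. Thus $R_l\equiv1\pmod{2^{2l}}$. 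Substituting $P_l=1+2^l g(l)$ into $P_{l+1}=P_l^2R_l$ and reducing mod $2^{2l}$ gives $1+2^{l+1}g(l+1)\equiv1+2^{l+1}g(l)$, i.e. $g(l+1)\equiv g(l)\pmod{2^{l-1}}$; hence $g(l)\to G$ with $g(l)\equiv G\pmod{2^{l-1}}$.

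Now fix $d$ and $e>d$. From $f(e+d)=f(e)\prod_{l=e+1}^{e+d}(1+2^l g(l))$, expanding and discarding the cross terms (each of $2$-adic valuation $\ge2e+3$) yields
$$\frac{f(e+d)-f(e)}{2^{e+1}}\ \equiv\ f(e)\sum_{j=0}^{d-1}2^{j}g(e+1+j)\pmod{2^{d}}.$$
Here $f(e)\equiv z\pmod{2^{d}}$, and in the $j$-th summand only $g(e+1+j)$ mod $2^{d-j}$ is relevant, which is $\equiv G$ since $e>d$; so the right-hand side is $\equiv zG\sum_{j=0}^{d-1}2^{j}=zG(2^{d}-1)\equiv-zG\pmod{2^{d}}$. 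Finally $\uns(e,d)=\lfloor f(e)/2^{e+1}\rfloor\bmod2^{d}$, while $f(e+d)\equiv z\pmod{2^{e+d+1}}$ gives $\stab(e+1,d)=\lfloor f(e+d)/2^{e+1}\rfloor\bmod2^{d}$; as $2^{e+1}$ divides $f(e+d)-f(e)$, the difference of these two floors equals $(f(e+d)-f(e))/2^{e+1}$. Hence $\stab(e+1,d)-\uns(e,d)\equiv-zG\pmod{2^{d}}$, and $K=-zG$ is the required constant.

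I expect the bulk of the work to be the valuation bookkeeping: checking that the hypothesis $e>d$ leaves just enough room in the two ``expand and discard'' steps, and handling the small $l$ for which $P_l\equiv1\pmod{2^l}$ fails (e.g. $l=2$, $P_2=3$) — this forces the sequence $(g(l))$ to start at $l=3$ but changes nothing about the limit. The one genuinely substantive inequality is the sublemma $R_l\equiv1\pmod{2^{2l}}$; the permutation-of-odd-residues observation makes it immediate, but absent that one would fall back on the pairing $j\leftrightarrow2^l-j$, which gives $j^{-1}+(2^l-j)^{-1}=2^l/\!\left(j(2^l-j)\right)$ and hence $\nu\!\left(\sum_j j^{-1}\right)\ge l$ directly. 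Identifying the explicit value $K=-zG$, and the analogous constants appearing in the generalizations of the later sections, is where I anticipate the calculations to be most delicate.
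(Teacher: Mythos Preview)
Your proposal is correct and is essentially the paper's own proof: your $G$ is the paper's $w=\lim\limits_{e\to\infty}\dfrac{(2^e-1)!!-1}{2^e}$, your sublemma $R_l\equiv1\pmod{2^{2l}}$ is exactly Proposition~\ref{weak1}, your convergence $g(l+1)\equiv g(l)\pmod{2^{l-1}}$ is Corollary~\ref{wcor}, and your final expansion/geometric-series step matches the telescoping in the proof of Theorems~\ref{thm1} and~\ref{Kthm}, yielding the same value $K=-zw$. The only cosmetic differences are that you expand the product $\prod_l(1+2^lg(l))$ directly rather than telescoping through Theorem~\ref{prod}, and you prove $\sum_{j\in S_l}j^{-1}\equiv0\pmod{2^l}$ via the observation that inversion permutes $S_l$ (the paper instead pairs $j$ with $2^l-j$).
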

\begin{expl} The BBE of $K$ begins $1011011$. The BBE's of $\uns(17,7)$ and $\stab(18,7)$
are $0101001$ and $1110110$, respectively. After reversing the order of the bits, the theorem is easily verified in this case.
\end{expl}

This relationship between the stable and unstable parts is, at least, a curiosity. It could be useful in calculations. We will see in Section \ref{sec2} that the first $d$ bits of $K$ can be determined from $d$ bits of $z$ and $d$ bits of another 2-adic integer $w$. So, for example, bits 18 through 24 of $z$ can be determined from $\od(2^{17}!)$ and $K$ mod $2^7$, which is an easier calculation than $\od(24!)$.

Let $\odprod(\ell,m)$ denote the product of all odd integers $j$ satisfying $\ell\le j\le m$, and let $$h(m)=\odprod(2^{m-1}+1,2^m-1).$$
We begin with the following elementary proposition.
\begin{prop} For any $e\ge1$,
$$\od(2^e!)=\prod_{m=2}^eh(m)^{e+1-m}.$$
\end{prop}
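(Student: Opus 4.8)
The plan is to evaluate $\od(2^e!)$ by splitting it into odd parts and then regrouping those parts into dyadic blocks. Since $\nu$ is additive, $\od$ satisfies $\od(ab)=\od(a)\od(b)$, so $\od(2^e!)=\prod_{j=1}^{2^e}\od(j)$. I would then sort the factors $j$ in $[1,2^e]$ by the value $k=\nu(j)$: the integers $j\le 2^e$ with $\nu(j)=k$ are exactly the numbers $2^km$ with $m$ odd and $1\le m\le 2^{e-k}$ (in particular $k$ ranges over $0,1,\dots,e$), and for these $\od(j)=m$. Hence
$$\od(2^e!)=\prod_{k=0}^e\ \prod_{\substack{m\text{ odd}\\ 1\le m\le 2^{e-k}}}m=\prod_{k=0}^e\odprod(1,2^{e-k})=\prod_{i=0}^e\odprod(1,2^i),$$
the last equality being the reindexing $i=e-k$.

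Next I would decompose each factor $\odprod(1,2^i)$ into dyadic pieces. For $i\ge1$ the number $2^i$ is even, so the odd integers in $[1,2^i]$ are precisely those in $[1,2^i-1]$, and these partition as $\{1\}\cup\bigcup_{m=2}^i\{\,j\text{ odd}:2^{m-1}+1\le j\le 2^m-1\,\}$; consecutive blocks abut with neither gap nor overlap, since between $2^m-1$ and $2^m+1$ only the even number $2^m$ is skipped. Therefore $\odprod(1,2^i)=\prod_{m=2}^i h(m)$ for every $i\ge1$, and this holds for $i=0$ as well, both sides then being the empty product $1$.

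Substituting this into the previous display and interchanging the order of multiplication,
$$\od(2^e!)=\prod_{i=0}^e\ \prod_{m=2}^i h(m)=\prod_{m=2}^e h(m)^{\#\{\,i\,:\,m\le i\le e\,\}}=\prod_{m=2}^e h(m)^{e+1-m},$$
which is the claimed formula.

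The argument is entirely elementary and I do not anticipate a genuine obstacle; the only points requiring care are the boundary bookkeeping — the parity of the endpoints $2^i$, the degenerate blocks arising when $m=1$ or $i\le1$, the verification that the dyadic blocks of odd integers tile $[1,2^i-1]$ exactly, and the multiplicity count $\#\{i:m\le i\le e\}=e+1-m$ obtained in the final step.
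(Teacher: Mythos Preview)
Your proof is correct and is essentially the same counting argument as the paper's one-line proof, which simply notes that each odd $j$ with $2^{m-1}<j<2^m$ appears in $2^e!$ as $2^ij$ for $0\le i\le e-m$, hence contributes $e+1-m$ times to $\od(2^e!)$. You have organized the same count in the other order---first grouping by $k=\nu(j)$ and then by the dyadic block of $\od(j)$---with the bookkeeping spelled out more fully.
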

\begin{proof} Each factor $j$ of $h(m)$ occurs with coefficient $2^i$ for $0\le i\le e-m$ in $2^e!$.
\end{proof}
This yields a method of computing $\od(2^e!)$ mod $2^B$, reducing mod $2^B$ at each step. The following theorem, which was the second conjecture of \cite{OEIS}, makes it more efficient.
\begin{thm}\label{thm2} If $2\le m-1\le B\le 3m-7$, and $d=2+\lfloor \frac{B-m}2\rfloor$, then
$$h(m)\equiv \odprod(2^{m-1}+1,2^{m-1}+2^d-1)^{2^{m-1-d}}\pmod{2^B}.$$
\end{thm}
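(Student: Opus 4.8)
The plan is to factor the product defining $h(m)$ into $N:=2^{m-1-d}$ consecutive blocks of $2^{d-1}$ odd integers and to establish the congruence block by block. Set $P_k:=\odprod(2^{m-1}+k2^d+1,\ 2^{m-1}+(k+1)2^d-1)$ for $0\le k<N$ and $X:=P_0$; then $h(m)=\prod_{k=0}^{N-1}P_k$, and the assertion is $h(m)/X^N\equiv 1\pmod{2^B}$. The boundary value $B=m-1$ forces $d=1$, blocks of a single integer, and $X=2^{m-1}+1$; there the claim collapses to $(2^{m-1}-1)!!\equiv 1\pmod{2^{m-1}}$, which holds because the odd integers in $(2^{m-1},2^m)$ reduce mod $2^{m-1}$ to a complete set of odd residues and the product of all units of $\Z/2^{m-1}$ is $1$ once $m-1\ge 3$. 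So assume $B\ge m$, i.e.\ $d\ge 2$; note also that the upper bound $B\le 3m-7$ is precisely $d\le m-2$, equivalently $N\ge 2$ and $m-d\ge 2$, which will be used throughout.

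The key idea is to exploit the reflection symmetry of each block before comparing blocks. Pairing the $\ell$-th smallest odd integer of block $k$ with its $\ell$-th largest gives $P_k=\prod_{\ell=0}^{2^{d-2}-1}(M_k^2-r_\ell^2)$, where $M_k:=2^{m-1}+k2^d+2^{d-1}=2^{d-1}(2^{m-d}+2k+1)$ is the block midpoint and $r_0,\dots$ run through the odd integers below $2^{d-1}$. The gain is that
\[
M_k^2-M_0^2=(M_k-M_0)(M_k+M_0)=2^{2d}\,k\,(2^{m-d}+k+1)
\]
carries a factor $2^{2d}$ — twice the factor $2^d$ in the unpaired difference $M_k-M_0=k2^d$ — while every $M_0^2-r_\ell^2$ is odd. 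Hence
\[
\frac{h(m)}{X^N}=\prod_{k=1}^{N-1}\frac{P_k}{P_0}=\prod_{k=1}^{N-1}\prod_{\ell=0}^{2^{d-2}-1}\Bigl(1+\frac{M_k^2-M_0^2}{M_0^2-r_\ell^2}\Bigr)
\]
is a product of elements of $1+2^{2d+1}\Z_2\subseteq 1+4\Z_2$, so we may take $2$-adic logarithms; since $v\mapsto\log v$ preserves valuation on $1+4\Z_2$ (that is, $\nu(\log v)=\nu(v-1)$), it suffices to prove $\nu(\log(h(m)/X^N))\ge B$.

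Expanding each logarithm and interchanging the ($2$-adically convergent) sums gives
\[
\log\frac{h(m)}{X^N}=\sum_{i\ge 1}\frac{(-1)^{i-1}}{i}\,2^{2di}\,G_i\,U_i,\qquad U_i:=\sum_{k=1}^{N-1}\bigl(k(2^{m-d}+k+1)\bigr)^i,\quad G_i:=\sum_{\ell=0}^{2^{d-2}-1}(M_0^2-r_\ell^2)^{-i}.
\]
Now one bounds valuations. An elementary induction (writing $k=2^{n-1}\epsilon+k'$) shows $\nu\bigl(\sum_{k=0}^{2^n-1}k^p\bigr)\ge n-1$ for every $p\ge 1$; since $U_i=\sum_k k^i\cdot(\text{integral polynomial in }k)+(\text{terms carrying a }2^{m-d}\text{ factor})$, this gives $\nu(U_i)\ge m-2-d$ for all $i$, sharpened for $i=1$ to $\nu(U_1)=\nu\bigl(\tfrac{(N-1)N(N+1)}{3}\bigr)=m-1-d$. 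For $G_1$, reducing mod $2^{2d-2}$ replaces $M_0^2$ by $0$, so $G_1\equiv-\sum_{s}s^{-2}\pmod{2^{2d-2}}$ with $s$ ranging over the odd integers below $2^{d-1}$; as $s\mapsto s^{-1}$ permutes those residues, $\sum_s s^{-2}\equiv\sum_s s^2\equiv 2^{d-2}\pmod{2^{d-1}}$, whence $\nu(G_1)=d-2$ (while $\nu(G_i)\ge 0$ trivially). Therefore the $i=1$ term has valuation $2d+(d-2)+(m-1-d)=m+2d-3\ge B$, using $2d\ge B-m+3$; and for $i\ge 2$ the bound $\nu(\text{term}_i)\ge 2di-\nu(i)+(m-2-d)\ge 3d+m-3$, again with $2d\ge B-m+3$, gives $\ge B$. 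Hence $h(m)\equiv X^N\pmod{2^B}$.

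The main obstacle is the valuation bookkeeping of the last paragraph: pinning down $\nu(G_1)=d-2$ exactly, and verifying $m+2d-3\ge B$, which is an equality precisely when $B-m$ is odd. This is where the hypothesis $m-1\le B\le 3m-7$ is used to full strength — the lower bound guaranteeing $d\ge 1$ (and $d\ge 2$ away from the degenerate case), the upper bound guaranteeing $d\le m-2$ so that $N\ge 2$, $m-d\ge 2$, and the power-sum estimates apply. Everything else — the block decomposition, the reflection pairing, and the passage to logarithms — is routine once organized this way.
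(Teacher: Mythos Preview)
Your proof is correct and follows a genuinely different route from the paper's. The paper proves the theorem by a telescoping chain of congruences
\[
h(m)\equiv\odprod(2^{m-1}+1,2^{m-1}+2^{d+1}-1)^{2^{m-2-d}}\equiv\odprod(2^{m-1}+1,2^{m-1}+2^{d}-1)^{2^{m-1-d}}\pmod{2^{2d+m-3}},
\]
each step being an instance of a standalone lemma $\prod_{i\in S_e}(A2^e+i)\equiv\prod_{i\in S_e}i\pmod{2^{3e-1}}$, which in turn is proved by estimating the elementary symmetric functions $\sihat_1(S_e)$ and $\sihat_2(S_e)$ via separate inductive arguments. You instead compare $h(m)$ with $X^N$ in one shot: the reflection pairing inside each block replaces the $\sihat_1$-estimate by the trivial observation that $M_k^2-M_0^2$ carries a factor $2^{2d}$, and after passing to $2$-adic logarithms the only nontrivial input left is $\nu(G_1)=d-2$, which reduces (via the inversion permutation) to the elementary power sum $\sum_{r\ \mathrm{odd}<2^{d-1}}r^2\equiv 2^{d-2}\pmod{2^{d-1}}$---considerably lighter than the paper's $\sihat_2$ lemma. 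What the paper's approach buys is a clean reusable congruence for shifted odd products; what yours buys is a shorter, more self-contained proof of the theorem itself. One small remark: your boundary case $B=m-1$ needs $m\ge4$ (as you note), and in fact the stated theorem fails at $m=3$, $B=2$, so this caveat is shared with the paper.
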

The advantage is that now $h(m)$ requires $2^{d-1}+m-2-d$ multiplications (always reducing mod $2^B$) compared with $2^{m-2}-1$ multiplications.

\section{A formula for the 2-adic integer $K$ of Theorem \ref{thm1}}\label{sec2}
In this section, we prove Theorem \ref{thm1} and give a formula for the 2-adic integer $K$ that occurs in it.
We begin by reviewing the proof in \cite{MAA} of existence of the 2-adic integer $z$, as some of the ingredients will be useful later.

\begin{lem} \label{bij} Let $I_e=\{i:2^{e-1}<i\le 2^e\}$ and $S_e=\{j: j\text{ odd and }1\le j<2^e\}$. Then $\od:I_e\to S_e$ is bijective.\end{lem}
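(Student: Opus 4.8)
The plan is to verify that $\od$ maps $I_e$ into $S_e$ and is a bijection by a counting argument, since both sets are finite of the same cardinality. First I would check the cardinalities: $I_e=\{i:2^{e-1}<i\le 2^e\}$ has exactly $2^{e-1}$ elements, and $S_e=\{j:j\text{ odd},\ 1\le j<2^e\}$ also has exactly $2^{e-1}$ elements (the odd numbers $1,3,\dots,2^e-1$). So it suffices to show $\od$ restricted to $I_e$ is injective with image contained in $S_e$; surjectivity is then automatic.

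For the image containment, take $i\in I_e$ and write $i=2^{\nu(i)}\od(i)$. Since $i\le 2^e$, we have $\od(i)=i/2^{\nu(i)}<2^e$ unless $i=2^e$ itself, in which case $\od(i)=1$; in all cases $\od(i)$ is odd and lies strictly below $2^e$, so $\od(i)\in S_e$. For injectivity, suppose $i,i'\in I_e$ with $\od(i)=\od(i')=:j$. Then $i=2^aj$ and $i'=2^bj$ for some $a,b\ge0$. The condition $2^{e-1}<2^aj\le 2^e$ pins down $a$ uniquely: indeed for a fixed odd $j<2^e$ there is exactly one power of $2$, namely $2^a$ with $a=\lceil\log_2(2^{e-1}/j)\rceil$ suitably interpreted, such that $2^aj$ lands in the half-open dyadic block $(2^{e-1},2^e]$, because consecutive multiples $2^aj$ and $2^{a+1}j$ differ by a factor of $2$ and the block has ratio $2$ between its endpoints. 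Hence $a=b$ and $i=i'$.

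The one step requiring a little care — and the only place I expect any friction — is the claim that each odd $j$ with $1\le j<2^e$ has a unique dyadic multiple in $(2^{e-1},2^e]$; I would phrase this cleanly by noting that the intervals $(2^{k-1},2^k]$ for $k\ge1$ partition $(1,\infty)$ together with the point $\{1\}$, and multiplying $j$ by $2$ shifts it from one such interval to the next, so exactly one multiple $2^aj$ with $a\ge 0$ falls in the top interval $(2^{e-1},2^e]$ provided $j\le 2^{e-1}$, while $j=2^e-1$ or similar large odd $j$ already lies in $(2^{e-1},2^e]$ with $a=0$; in every case the relevant $a$ is $a=e-1-\lfloor\log_2 j\rfloor$. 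This gives both injectivity (the recovery of $i$ from $j$) and, directly, surjectivity, completing the proof. $\square$
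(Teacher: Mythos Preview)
Your proof is correct and ultimately rests on the same observation as the paper's: for each odd $j$ with $1\le j<2^e$ there is a unique nonnegative integer $t$ with $2^tj\in(2^{e-1},2^e]$. The paper simply records this as an explicit inverse, writing $\phi(u)=2^tu$ with $t=\max\{k:2^ku\le 2^e\}$, and stops; your route through equal cardinalities and injectivity is a slightly longer path to the same destination. One small slip: your closing formula $a=e-1-\lfloor\log_2 j\rfloor$ fails at $j=1$, where the correct value is $a=e$ (so that $2^e\cdot1=2^e\in I_e$); the paper's description $t=\max\{k:2^k u\le 2^e\}$ handles this boundary case uniformly.
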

\begin{proof} The inverse function $\phi$ is defined by $\phi(u)=2^tu$ where $t=\max\{k:2^ku\le 2^e\}$.\end{proof}
\begin{lem} \label{Gauss} If $e\ge3$, the product of all odd positive integers less than $2^e$ is $\equiv2^e+1\pmod{2^{e+1}}$.\end{lem}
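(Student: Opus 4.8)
The plan is an induction on $e$, powered by a recursion that produces $P_{e+1}$ modulo $2^{e+2}$ out of $P_e$ modulo $2^{e+1}$. Write $P_e=\prod\{\,j:j\ \text{odd},\ 1\le j<2^e\,\}$, a product of $2^{e-1}$ factors, so that the assertion is $P_e\equiv 2^e+1\pmod{2^{e+1}}$ for $e\ge 3$. The recursion I will establish is
$$P_{e+1}\equiv P_e^{\,2}\pmod{2^{e+2}}\qquad(e\ge 2).$$
Granting it, the induction is immediate: squaring a congruence modulo $2^{e+1}$ automatically yields one modulo $2^{e+2}$, so if $P_e\equiv 2^e+1\pmod{2^{e+1}}$ then $P_{e+1}\equiv (2^e+1)^2=1+2^{e+1}+2^{2e}\equiv 2^{e+1}+1\pmod{2^{e+2}}$, using $2e\ge e+2$ for $e\ge 2$. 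The base case is the single check $P_3=1\cdot 3\cdot 5\cdot 7=105\equiv 9\equiv 2^3+1\pmod{16}$; the hypothesis $e\ge 3$ is genuine, since $P_2=3\not\equiv 2^2+1\pmod{2^3}$.

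To prove the recursion, factor $P_{e+1}=P_e\cdot Q_e$, where $Q_e$ is the product of the odd integers lying strictly between $2^e$ and $2^{e+1}$, and substitute $k=2^{e+1}-\ell$: as $k$ runs over those odd integers, $\ell$ runs over the odd integers in $(0,2^e)$, so $Q_e=\prod_\ell(2^{e+1}-\ell)$. Expanding the product over the $2^{e-1}$ values of $\ell$, the term containing no factor $2^{e+1}$ is $(-1)^{2^{e-1}}\prod_\ell\ell=P_e$ (the exponent is even for $e\ge 2$); the term linear in $2^{e+1}$ is $\pm 2^{e+1}$ times a sum of $2^{e-1}$ odd integers, hence divisible by $2^{e+2}$ for $e\ge 2$; and every further term is divisible by $(2^{e+1})^2$, hence by $2^{e+2}$. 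Thus $Q_e\equiv P_e\pmod{2^{e+2}}$, and multiplying by $P_e$ gives $P_{e+1}=P_eQ_e\equiv P_e^{\,2}\pmod{2^{e+2}}$.

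The only point that needs care — and I expect it to be the sole obstacle — is keeping the precision exactly right. The quick observations, such as pairing $j\leftrightarrow 2^e-j$ or recognizing the odd integers below $2^e$ together with their negatives as a full system of units modulo $2^{e+1}$, give only $P_e^{\,2}\equiv 1\pmod{2^{e+1}}$, which pins $P_e$ down modulo $2^{e+1}$ only to the pair $\{1,\ 2^e+1\}$ and cannot choose between them. Pushing to modulus $2^{e+2}$ resolves this, but one must then be sure that the leading correction term in the expansion of $Q_e$ above is divisible by $2^{e+2}$ and not merely by $2^{e+1}$; this divisibility rests on the parity of $2^{e-1}$, which is precisely why $e\ge 2$ is needed in the recursion and the induction is started at $e=3$.
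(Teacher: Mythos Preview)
Your proof is correct. The recursion $P_{e+1}\equiv P_e^{\,2}\pmod{2^{e+2}}$ is established exactly as you say, and the induction step and base case are handled cleanly. One small remark: when you assert that ``squaring a congruence modulo $2^{e+1}$ automatically yields one modulo $2^{e+2}$,'' this holds because $a\equiv b\pmod{2^{e+1}}$ forces $a+b$ even, so $a^2-b^2=(a-b)(a+b)$ picks up the extra factor of $2$; you might state this explicitly, but it is standard.

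Your route is genuinely different from the paper's. The paper does not induct: it argues directly via the multiplicative structure of $(\Z/2^e)^\times$, first pairing each odd $j<2^e$ with its inverse mod $2^e$ (a Wilson-type argument) to get the product $\equiv1\pmod{2^e}$, and then refining to mod $2^{e+1}$ by showing, via a fixed-point-free involution $(a,b)\mapsto(2^e-b,2^e-a)$, that the pairs with $ab\equiv2^e+1\pmod{2^{e+1}}$ come in an even number, leaving only the four self-inverse elements $1,\,2^e-1,\,2^{e-1}\pm1$ to contribute the $2^e+1$. Your argument is more elementary in that it avoids inverses and the unit-group structure entirely, trading them for a short induction; the paper's argument is more structural and non-inductive, and separates the weaker mod-$2^e$ statement (which it attributes to Granville) from the sharpening to mod $2^{e+1}$.
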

\begin{proof} We begin with the proof from \cite[Lemma 1]{Gran} that the product is 1 mod $2^e$. Pair each element with its inverse in $\Z/2^e$. Only $\pm1$ and $2^{e-1}\pm1$ equal their own inverse, and their product is 1.

Let $P$ be the set of pairs $(a,b)$ with $a<b<2^e$ odd and $ab\equiv 2^e+1$ mod $2^{e+1}$. If $(a,b)\in P$, so is $(2^e-b,2^e-a)$ since $a+b$ is even. Moreover, $(a,b)\ne (2^e-b,2^e-a)$ since, if so, then $a(2^e-a)\equiv 2^e+1$ mod $2^{e+1}$, which cannot occur since $a^2\equiv 1$ mod 8. Thus the cardinality of $P$ is even, and the product of all $ab$ with $(a,b)\in P$ is 1 mod $2^{e+1}$. Other pairs $(c,d)$ with $cd\equiv1$ mod $2^e$ have $cd\equiv1$ mod $2^{e+1}$. Finally we have $1$, $2^e-1$, and $2^{e-1}\pm1$, whose product is $\equiv 2^e+1$ mod $2^{e+1}$.
\end{proof}
\begin{cor} For $e\ge3$, $\od(2^{e-1}!)\equiv\od(2^e!)\pmod{2^e}$.\end{cor}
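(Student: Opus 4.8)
The plan is to express $\od(2^e!)$ as a product of the quantities controlled by Lemma~\ref{Gauss}, using the bijection of Lemma~\ref{bij} to identify the relevant blocks. Since $\nu$ is additive, $\od$ is multiplicative, so $\od(2^e!)=\prod_{i=1}^{2^e}\od(i)$. First I would partition $\{1,\dots,2^e\}$ as $\{1\}\sqcup I_1\sqcup I_2\sqcup\cdots\sqcup I_e$, where $I_k=\{i:2^{k-1}<i\le 2^k\}$ as in Lemma~\ref{bij}. Writing $Q_k$ for the product of all odd positive integers less than $2^k$, Lemma~\ref{bij} gives $\prod_{i\in I_k}\od(i)=\prod_{j\in S_k}j=Q_k$, and since $\od(1)=1$ this yields
$$\od(2^e!)=\prod_{k=1}^e Q_k.$$

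Comparing this with the same formula for $e-1$ in place of $e$, I get the integer identity $\od(2^e!)=\od(2^{e-1}!)\cdot Q_e$. Now Lemma~\ref{Gauss} (applicable since $e\ge 3$) gives $Q_e\equiv 2^e+1\equiv 1\pmod{2^e}$, so writing $Q_e=1+2^e m$ produces $\od(2^e!)=\od(2^{e-1}!)+2^e m\,\od(2^{e-1}!)\equiv\od(2^{e-1}!)\pmod{2^e}$, which is the claim.

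There is essentially no obstacle here: the only point needing a moment's care is checking that the blocks $I_k$ together with $\{1\}$ genuinely partition $\{1,\dots,2^e\}$ and that Lemma~\ref{bij} matches $I_k$ with $S_k$ for each $k$. Once the product formula $\od(2^e!)=\prod_{k=1}^e Q_k$ is in hand, the congruence is immediate, and one does not even use the stronger mod-$2^{e+1}$ information that Lemma~\ref{Gauss} provides (that sharper statement will matter elsewhere, but not here).
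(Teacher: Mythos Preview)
Your proof is correct and follows essentially the same approach as the paper: both identify $\od(2^e!)/\od(2^{e-1}!)=\prod_{i\in I_e}\od(i)=\prod_{j\in S_e}j$ via Lemma~\ref{bij} and then apply Lemma~\ref{Gauss} to conclude this ratio is $\equiv1\pmod{2^e}$. Your version simply adds the intermediate step of writing the full product formula $\od(2^e!)=\prod_{k=1}^e Q_k$ before extracting the ratio, whereas the paper writes the ratio directly.
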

\begin{proof} By Lemmas \ref{bij} and \ref{Gauss},
$$\frac{\od(2^e!)}{\od(2^{e-1}!)}=\prod_{i\in I_e}\od(i)=\prod_{j\in S_e}j\equiv1\pmod{2^e}.$$\end{proof}
\begin{cor}\label{zdef} There is a 2-adic integer $z$ which equals $\od(2^{e-1}!)$ mod $2^e$.\end{cor}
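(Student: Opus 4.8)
The plan is to realize $z$ directly as a 2-adic limit. Recall that a 2-adic integer is the same thing as a compatible system of residues modulo the powers $2^e$, equivalently the limit of a $2$-adically Cauchy sequence of ordinary integers. So I would set $z_e:=\od(2^{e-1}!)$ for $e\ge 3$ and show that $(z_e)_{e\ge3}$ is such a Cauchy sequence; its limit is then the desired $z$, and the congruence $z\equiv\od(2^{e-1}!)\pmod{2^e}$ will fall out of the construction.

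The only point that requires verification is the compatibility condition $z_{e+1}\equiv z_e\pmod{2^e}$, that is,
$$\od(2^e!)\equiv\od(2^{e-1}!)\pmod{2^e},$$
and this is exactly the statement of the Corollary immediately preceding the one to be proved (valid for $e\ge3$). Telescoping this over consecutive indices gives $z_f\equiv z_e\pmod{2^e}$ for all $f>e\ge3$, so $(z_e)$ is Cauchy and, by completeness of the 2-adic integers, converges to some $z$. Letting $f\to\infty$ in $z_f\equiv z_e\pmod{2^e}$ (the left side converges $2$-adically to $z$, and reduction mod $2^e$ is continuous) yields $z\equiv\od(2^{e-1}!)\pmod{2^e}$, which is the claim.

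I do not expect any substantive obstacle: this corollary is an essentially formal consequence of the preceding one together with completeness of $\Z_2$, and its real purpose is just to name the object $z$ used throughout the rest of the paper. The one thing to be careful about is the range of $e$. The input congruence $\od(2^{e-1}!)\equiv\od(2^e!)\pmod{2^e}$, and hence the conclusion, is only asserted for $e\ge3$; indeed $\od(2!)=1$ while the table shows $z\equiv 3\pmod 4$, so the congruence genuinely fails for $e=2$. Since a 2-adic integer is pinned down by its residues modulo $2^e$ for all large $e$, discarding the two small cases costs nothing, and the statement should be read with the convention $e\ge3$ inherited from the preceding corollary.
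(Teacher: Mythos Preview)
Your argument is correct and is exactly the (implicit) reasoning the paper intends: Corollary~\ref{zdef} is stated without proof as an immediate consequence of the preceding corollary, and your write-up simply spells out the standard passage from the compatibility congruences $\od(2^{e-1}!)\equiv\od(2^e!)\pmod{2^e}$ to a $2$-adic limit. Your caveat about the range $e\ge3$ is also apt and matches the paper's convention.
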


\begin{rmk}{\rm The stronger (mod $2^{e+1}$) part of Lemma \ref{Gauss} was not needed here, but will be used shortly.}\end{rmk}

A stronger version of the next  result will be proved in Theorem \ref{hard}.
\begin{prop} \label{weak1} With $S_e$ as above,
$$\prod_{i\in S_e}i\equiv\prod_{i\in S_e}(2^e+i)\pmod{2^{2e}}.$$\end{prop}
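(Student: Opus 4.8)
The plan is to exploit the involution $\sigma(j)=2^e-j$ of $S_e$ — the same device that drove the proof of Lemma~\ref{Gauss}. First I would record that, for $e\ge 2$, the set $S_e=\{1,3,\dots,2^e-1\}$ is a disjoint union of two-element $\sigma$-orbits $\{j,\,2^e-j\}$: the map $\sigma$ carries odd integers in $(0,2^e)$ to odd integers in $(0,2^e)$, and it has no fixed point since $j=2^e-j$ would force $j=2^{e-1}$, which is even. (For $e=1$ the proposition is false, so some hypothesis of this sort is unavoidable.) Grouping both sides of the asserted congruence by $\sigma$-orbits gives
\[
\prod_{j\in S_e}j=\prod_{\{j,\,2^e-j\}}j(2^e-j),\qquad \prod_{j\in S_e}(2^e+j)=\prod_{\{j,\,2^e-j\}}(2^e+j)\bigl(2^{e+1}-j\bigr),
\]
the products on the right running over a set of orbit representatives, and using that $2^e+(2^e-j)=2^{e+1}-j$.

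The second step is the per-orbit computation. A one-line expansion shows
\[
(2^e+j)\bigl(2^{e+1}-j\bigr)-j(2^e-j)=2^{2e+1},
\]
so each factor of the second product above is congruent $\bmod\ 2^{2e+1}$ to the corresponding factor of the first. Since congruences are preserved under multiplication, $\prod_{j\in S_e}(2^e+j)\equiv\prod_{j\in S_e}j\pmod{2^{2e+1}}$, which is slightly stronger than the stated congruence mod $2^{2e}$.

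I do not expect a genuine obstacle here: the whole argument is the orbit pairing plus one elementary identity. The only points needing care are checking that $\sigma$ is fixed-point-free (so that the orbits really do come in pairs — this is exactly where $e\ge 2$ is used) and the algebra of the per-orbit difference. As an alternative, one could instead write $\prod_{j\in S_e}(2^e+j)=\bigl(\prod_{j\in S_e}j\bigr)\prod_{j\in S_e}(1+2^ej^{-1})\equiv\bigl(\prod_{j\in S_e}j\bigr)\bigl(1+2^e\sum_{j\in S_e}j^{-1}\bigr)\pmod{2^{2e}}$ and note that reduction mod $2^e$ permutes $S_e$, whence $\sum_{j\in S_e}j^{-1}\equiv\sum_{j\in S_e}j=2^{2e-2}\equiv 0\pmod{2^e}$ for $e\ge 2$; but I would favor the pairing argument, since it is self-contained and yields the extra power of $2$ for free.
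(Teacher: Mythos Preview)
Your proof is correct, and the per-orbit identity $(2^e+j)(2^{e+1}-j)-j(2^e-j)=2^{2e+1}$ checks out, so you indeed get the congruence mod $2^{2e+1}$, one power of $2$ better than stated.

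Your route is genuinely different from the paper's. Both arguments rest on the involution $j\mapsto 2^e-j$, but the paper expands $\prod(2^e+i)-\prod i$ as $\sum_{j>0}2^{je}\sihat_j(S_e)$ and then uses the pairing to show $\sihat_1(S_e)\equiv 0\pmod{2^e}$ (pairing the summands $\prod_{i\ne j}i$ and $\prod_{i\ne 2^e-j}i$). You instead pair the factors of the two products themselves and observe that each orbit contributes an exact additive shift of $2^{2e+1}$. Your argument is shorter, self-contained, and squeezes out the extra factor of $2$; the paper's symmetric-function viewpoint, on the other hand, is the one that scales to the sharper Theorem~\ref{hard} (mod $2^{3e-1}$), where one needs precise information about $\sihat_1(S_e)$ and $\sihat_2(S_e)$ rather than an orbit-by-orbit miracle. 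Your alternative approach via $\sum_{j\in S_e}j^{-1}\equiv\sum_{j\in S_e}j=2^{2e-2}$ is essentially a slicker version of the paper's argument for $\sihat_1(S_e)\equiv 0\pmod{2^e}$.
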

\begin{proof} If $S$ is a set of cardinality $n$, let $\sihat_i(S)=\si_{n-i}(S)$, where $\s$ is the usual elementary symmetric polynomial. Then $\sihat_1(S_e)$ is divisible by $2^e$ since, for odd $j\le 2^{e-1}-1$,
$$\prod_{\substack{i\in S_e\\i\ne j}}i+\prod_{\substack{i\in S_e\\i\ne 2^e-j}}i\text{ is divisible by }2^e.$$
We have
$$\prod_{i\in S_e}(2^e+i)-\prod_{i\in S_e}i=\sum_{j>0}2^{je}\sihat_j(S_e)\equiv 2^{e}\sihat_1(S_e)\equiv0\pmod{2^{2e}}.$$
\end{proof}

Let $(2^e-1)!!=\odprod(1,2^e-1)$.
\begin{cor}\label{wcor} For $e\ge2$, $\dfrac{(2^e-1)!!-1}{2^e}\equiv\dfrac{(2^{e+1}-1)!!-1}{2^{e+1}} \pmod{2^{e-1}}$.\end{cor}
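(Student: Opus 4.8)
The plan is to express $(2^{e+1}-1)!!$ in terms of $(2^e-1)!!$ and then apply Proposition \ref{weak1} together with Lemma \ref{Gauss}.

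First I would split the double factorial. The odd integers $j$ with $2^e<j<2^{e+1}$ are exactly the numbers $2^e+i$ with $i\in S_e$, so $S_{e+1}=S_e\sqcup(2^e+S_e)$ and therefore
$$(2^{e+1}-1)!!=\Bigl(\prod_{i\in S_e}i\Bigr)\Bigl(\prod_{i\in S_e}(2^e+i)\Bigr)=(2^e-1)!!\cdot\prod_{i\in S_e}(2^e+i).$$
Proposition \ref{weak1} gives $\prod_{i\in S_e}(2^e+i)\equiv(2^e-1)!!\pmod{2^{2e}}$, and since $(2^e-1)!!$ is odd (hence a unit mod $2^{2e}$) we may multiply through to obtain
$$(2^{e+1}-1)!!\equiv\bigl((2^e-1)!!\bigr)^2\pmod{2^{2e}}.$$

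Next I would use Lemma \ref{Gauss} in its weaker mod-$2^e$ form, which says $(2^e-1)!!=1+2^ek$ for some $k\in\Z$ (valid for $e\ge3$). Squaring gives $\bigl((2^e-1)!!\bigr)^2=1+2^{e+1}k+2^{2e}k^2\equiv1+2^{e+1}k\pmod{2^{2e}}$, and $1+2^{e+1}k=2(2^e-1)!!-1$; combining this with the previous display yields
$$(2^{e+1}-1)!!-1\equiv2\bigl((2^e-1)!!-1\bigr)\pmod{2^{2e}}.$$
The right-hand side is divisible by $2^{e+1}$ because $(2^e-1)!!-1$ is divisible by $2^e$, and the left-hand side is divisible by $2^{e+1}$ by Lemma \ref{Gauss} applied to $e+1$. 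Dividing through by $2^{e+1}$ gives the claimed congruence modulo $2^{2e}/2^{e+1}=2^{e-1}$.

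I do not expect a genuine obstacle; the one point to watch is that the two inputs fit together exactly. The mod-$2^{2e}$ strength of Proposition \ref{weak1} is just enough to absorb the error term $\bigl((2^e-1)!!-1\bigr)^2$, and that term vanishes mod $2^{2e}$ precisely because of the mod-$2^e$ part of Lemma \ref{Gauss}; this is also why one needs $e\ge3$, so that $(2^e-1)!!\equiv1\pmod{2^e}$ genuinely holds and the quotients in the statement are $2$-adic integers.
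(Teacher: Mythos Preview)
Your proof is correct and uses the same two ingredients (Proposition \ref{weak1} and Lemma \ref{Gauss}) in essentially the same way as the paper; the only difference is that you show the \emph{difference} of the two quotients vanishes mod $2^{e-1}$, whereas the paper shows their \emph{ratio} is $\equiv1\pmod{2^{e-1}}$. A small bonus of your route is that it needs only the mod-$2^e$ part of Lemma \ref{Gauss}, while the paper's ratio computation uses the full mod-$2^{e+1}$ statement to know the quotients are odd; incidentally, your separate appeal to Lemma \ref{Gauss} at $e+1$ is redundant, since $2^{e+1}\mid(2^{e+1}-1)!!-1$ already follows from your congruence mod $2^{2e}$ and the divisibility of the right-hand side.
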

\begin{proof} By Lemma \ref{Gauss}, the two expressions are odd integers. We will show that their ratio is $\equiv1$ mod $2^{e-1}$.
Let $A=(2^e-1)!!-1$. By Lemma \ref{Gauss}, $A=2^eu$ with $u$ odd. By Proposition \ref{weak1}, $\odprod(2^e+1,2^{e+1}-1)=A+1+k2^{2e}$ for some integer $k$. The desired ratio is
\begin{align*}&\frac{(A+1)(A+1+k2^{2e})-1}{2A}\\
=\ &\frac{A^2+2A+(A+1)k2^{2e}}{2A}=2^{e-1}u+1+\frac{(A+1)k2^{2e}}{2^{e+1}u}\equiv1\pmod{2^{e-1}}.\end{align*}
\end{proof}

\begin{defin} We define $w$ to be the 2-adic integer which equals $\dfrac{(2^e-1)!!-1}{2^e}$ mod $2^{e-1}$.\label{wdef}\end{defin}

\noindent The binary expansion of $w$ ends $\cdots 1001110011001$.

We have introduced two 2-adic integers, $z$ and $w$. The next result shows that their product equals the difference of the unstable parts of consecutive rows of Figure \ref{table} in a metastable range.
\begin{thm}\label{prod} The difference of the unstable parts of $\od(2^e!)$ and $\od(2^{e-1}!)$, i.e., $\dfrac{\od(2^e!)-\od(2^{e-1}!)}{2^e}$, is congruent mod $2^{e-1}$ to $w\cdot z$.\end{thm}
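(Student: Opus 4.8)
The plan is to express $\od(2^e!)$ and $\od(2^{e-1}!)$ in terms of the products over $S_e$, using the proof of Corollary \ref{zdef}. From that corollary's proof, $\od(2^e!)/\od(2^{e-1}!)=\prod_{j\in S_e}j=(2^e-1)!!$, so that $\od(2^e!)=(2^e-1)!!\cdot\od(2^{e-1}!)$. Hence the difference of unstable parts is
$$\frac{\od(2^e!)-\od(2^{e-1}!)}{2^e}=\frac{\bigl((2^e-1)!!-1\bigr)\cdot\od(2^{e-1}!)}{2^e}.$$
Now $(2^e-1)!!-1$ is divisible by $2^e$ by Lemma \ref{Gauss}, and by Definition \ref{wdef} the quotient $\bigl((2^e-1)!!-1\bigr)/2^e$ is congruent to $w$ mod $2^{e-1}$. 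Meanwhile $\od(2^{e-1}!)\equiv z$ mod $2^e$, hence certainly mod $2^{e-1}$, by Corollary \ref{zdef}. So the right-hand side is congruent mod $2^{e-1}$ to $w\cdot z$, which is exactly the claim.

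The only point requiring a little care is the precision bookkeeping: I am multiplying a factor known mod $2^{e-1}$ (namely $\bigl((2^e-1)!!-1\bigr)/2^e\equiv w$) by a factor known mod $2^e$ (namely $\od(2^{e-1}!)\equiv z$), and I want the product to be correct mod $2^{e-1}$. Since the second factor $\od(2^{e-1}!)$ is odd (it is an odd part), multiplying the first congruence by this unit preserves it mod $2^{e-1}$, and then replacing $\od(2^{e-1}!)$ by $z$ changes the product by a multiple of $2^e\cdot(\text{unit})$, hence is harmless mod $2^{e-1}$. I should also confirm the hypothesis $e\ge2$ of the theorem is enough to invoke Definition \ref{wdef} (it is, since $w$ was defined for $e\ge2$ via Corollary \ref{wcor}) and, where Lemma \ref{Gauss} or Corollary \ref{zdef} is used, that $e\ge3$; the edge case $e=2$ can be checked directly against Figure \ref{table} if needed.

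I do not expect any genuine obstacle here — the content has essentially all been front-loaded into Lemma \ref{Gauss}, Corollary \ref{zdef}, Proposition \ref{weak1}, Corollary \ref{wcor}, and Definition \ref{wdef}. The theorem is the clean packaging of those ingredients: the factorization $\od(2^e!)=(2^e-1)!!\cdot\od(2^{e-1}!)$ splits the difference of unstable parts into the "$w$ part" (coming from the double factorial, governed by Corollary \ref{wcor} and Proposition \ref{weak1}) and the "$z$ part" (the stable odd factorial, governed by Corollary \ref{zdef}). The mild subtlety is simply making sure each congruence is quoted at the right 2-adic precision and that the unit $\od(2^{e-1}!)$ is used to transfer precision cleanly.
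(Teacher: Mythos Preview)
Your proof is correct and is essentially identical to the paper's own argument: both use Lemma~\ref{bij} to write $\od(2^e!)=(2^e-1)!!\cdot\od(2^{e-1}!)$, factor the difference as $\dfrac{(2^e-1)!!-1}{2^e}\cdot\od(2^{e-1}!)$, and then invoke Definition~\ref{wdef} and Corollary~\ref{zdef}. Your added remarks on the precision bookkeeping and the edge case $e=2$ are just explicit elaborations of steps the paper leaves implicit.
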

\begin{proof} By Lemma \ref{bij}, we have $(2^e-1)!!=\dfrac{\od(2^e!)}{\od(2^{e-1}!)}$.
Thus
$$\frac{(2^e-1)!!-1}{2^e}=\frac{\od(2^e!)-\od(2^{e-1}!)}{2^e\od(2^{e-1}!)},$$
so $$\frac{(2^e-1)!!-1}{2^e}\cdot\od(2^{e-1}!)=\frac{\od(2^e!)-\od(2^{e-1}!)}{2^e}.$$
The result follows now from Corollary \ref{zdef} and Definition \ref{wdef}.
\end{proof}
\begin{expl} The binary expansion of $zw$ ends $\cdots011000010011$. The numbers $\od(2^7!)$ and $\od(2^6!)$ agree mod $2^7$. Beginning in the $2^7$ position, the binary expansion of $\od(2^7!)$ ends $\cdots 1010011$, while that of $\od(2^6!)$ ends with eight 0's. The difference agrees with $zw$ mod $2^6$.\end{expl}

We now state the main theorem of this section.
\begin{thm} \label{Kthm} The 2-adic integer $K$ of Theorem \ref{thm1} equals $-zw$.
\end{thm}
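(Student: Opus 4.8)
The plan is to prove Theorem \ref{thm1} and Theorem \ref{Kthm} in one stroke, by verifying directly that for every $d\ge 1$ and every $e>d$
$$\uns(e,d)+(-zw)\equiv\stab(e+1,d)\pmod{2^d};$$
the existence of the constant $K$ asserted in Theorem \ref{thm1} then follows, with $K=-zw$.

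The first step is to recast the statement in ordinary $2$-adic arithmetic. By definition $\uns(e,d)$ and $\stab(e+1,d)$ are the integers in $[0,2^d)$ whose BBE's are bits $e+1,\dots,e+d$ of $\od(2^e!)$ and of $z$, so $\uns(e,d)=\lfloor\od(2^e!)/2^{e+1}\rfloor\bmod 2^d$ and $\stab(e+1,d)=\lfloor z/2^{e+1}\rfloor\bmod 2^d$. Corollary \ref{zdef}, applied with $e+1$ in place of $e$, gives $z\equiv\od(2^e!)\pmod{2^{e+1}}$ (this is exactly the statement that bits $0$ through $e$ of $\od(2^e!)$ are stable). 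Since $z$ and $\od(2^e!)$ have the same residue mod $2^{e+1}$, their quotients by $2^{e+1}$ differ precisely by $(z-\od(2^e!))/2^{e+1}$, and therefore
$$\stab(e+1,d)-\uns(e,d)\equiv\frac{z-\od(2^e!)}{2^{e+1}}\pmod{2^d}.$$
So it suffices to show $(z-\od(2^e!))/2^{e+1}\equiv -zw\pmod{2^d}$ whenever $e>d$.

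The second step replaces $z$ by a finite factorial and telescopes along Figure \ref{table}. Corollary \ref{zdef} with $e+d+1$ in place of $e$ gives $z\equiv\od(2^{e+d}!)\pmod{2^{e+d+1}}$, so modulo $2^{e+d+1}$
$$z-\od(2^e!)\equiv\od(2^{e+d}!)-\od(2^e!)=\sum_{j=e+1}^{e+d}\bigl(\od(2^j!)-\od(2^{j-1}!)\bigr).$$
Theorem \ref{prod} says $\od(2^j!)-\od(2^{j-1}!)\equiv 2^j wz\pmod{2^{2j-1}}$, and for $j\ge e+1$ we have $2j-1\ge 2e+1\ge e+d+1$ (using $e>d$), so each summand equals $2^j wz$ mod $2^{e+d+1}$. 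Summing the geometric progression, $\sum_{j=e+1}^{e+d}2^j=2^{e+d+1}-2^{e+1}\equiv-2^{e+1}\pmod{2^{e+d+1}}$, whence $z-\od(2^e!)\equiv-2^{e+1}wz\pmod{2^{e+d+1}}$. Dividing through by $2^{e+1}$ yields $(z-\od(2^e!))/2^{e+1}\equiv -wz\pmod{2^d}$, which is what was needed.

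Everything here is routine once the telescoping is in place; the single load-bearing input is Theorem \ref{prod} (which itself rests on Corollary \ref{wcor} and Proposition \ref{weak1}), converting one step of the filtration into multiplication by $wz$. The only points requiring care are the modulus bookkeeping after dividing by $2^{e+1}$ and the identification of $\uns(e,d)$ and $\stab(e+1,d)$ with the correct ``floor and reduce'' operations; I do not expect a genuine obstacle.
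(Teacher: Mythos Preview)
Your proof is correct and is essentially the same as the paper's: both identify $\stab(e+1,d)-\uns(e,d)$ with $(\od(2^{e+d}!)-\od(2^e!))/2^{e+1}\bmod 2^d$, telescope, apply Theorem \ref{prod} termwise, and sum the resulting geometric series to $-zw$. Your version routes the first identification through $z$ before replacing it by $\od(2^{e+d}!)$, and is a bit more explicit about the modulus bookkeeping, but the argument is otherwise identical.
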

\begin{proof}[Proof of Theorems \ref{thm1} and \ref{Kthm}.] The difference
$\stab(e+1,d)-\uns(e,d)$, as described in the paragraph preceding Theorem \ref{thm1}, equals $\dfrac{\od(2^{e+d}!)-\od(2^e!)}{2^{e+1}}$ mod $2^d$. We have
\begin{align*}&\frac{\od(2^{e+d}!)-\od(2^e!)}{2^{e+1}}\\
=\ &\sum_{i=1}^d\frac{\od(2^{e+i}!)-\od(2^{e+i-1}!)}{2^{e+1}}\\
=\ &\sum_{i=1}^d2^{i-1}\frac{\od(2^{e+i}!)-\od(2^{e+i-1}!)}{2^{e+i}}\\
\equiv\ &\sum_{i=1}^d2^{i-1}zw\pmod{2^e}\\
\equiv\ &\sum_{i=1}^\infty2^{i-1}zw\pmod{2^d}\\
=\ &-zw.\end{align*}
\end{proof}

\begin{expl} The binary expansion of $-zw$ ends $\cdots010111101101$. Add that to the binary number obtained by reversing the order of the first 12 bits after the space on line 14 of Figure \ref{table}, and you obtain the binary number obtained by reversing the order of the last 12 bits before the space on line 26.\end{expl}

\section{Proof of Theorem \ref{thm2}}\label{sec3}
In this section, we prove Theorem \ref{thm2} and some mild generalizations. The bulk of our work is the following strengthening of Proposition \ref{weak1}, the proof of which appears later.
\begin{thm} \label{hard} With $S_e$ as defined in Lemma \ref{bij}, and $A$ any integer,
$$\prod_{i\in S_e}(A2^e+i)\equiv\prod_{i\in S_e}i\pmod{2^{3e-1}}.$$
\end{thm}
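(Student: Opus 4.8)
The plan is to expand the product $\prod_{i\in S_e}(A2^e+i)$ as a polynomial in $A2^e$ and show that all but the leading term vanish mod $2^{3e-1}$. Writing $n=|S_e|=2^{e-1}$, we have
$$\prod_{i\in S_e}(A2^e+i)-\prod_{i\in S_e}i=\sum_{j\ge1}A^j\,2^{je}\,\sihat_j(S_e),$$
where $\sihat_j(S_e)=\s_{n-j}(S_e)$ is the sum of products of the elements of $S_e$ taken $n-j$ at a time, as in the proof of Proposition \ref{weak1}. The term $j=1$ contributes $A\cdot 2^e\cdot\sihat_1(S_e)$; the pairing argument already given in the proof of Proposition \ref{weak1} shows $\nu(\sihat_1(S_e))\ge e$, so this term is divisible by $2^{2e}$. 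The point of the strengthening is that we need it divisible by $2^{3e-1}$, so the main work is to prove $\nu(\sihat_1(S_e))\ge 2e-1$, i.e. that $\sihat_1(S_e)/2^e$ — which, up to a unit, is essentially the integer $w$ of Definition \ref{wdef} times $(2^e-1)!!$ — is divisible by $2^{e-1}$. Actually this is exactly the content of Corollary \ref{wcor}/Definition \ref{wdef}: $\sihat_1(S_e)=(2^e-1)!!\cdot\sum_{i\in S_e}i^{-1}$ in $\Z/2^{\text{large}}$, and $\sum_{i\in S_e}i^{-1}\equiv\sum_{i\in S_e}i\pmod{2^{2e}}$ by pairing $i$ with $2^e-i$ (their inverses pair up the same way since $(2^e-i)^{-1}\equiv -i^{-1}$), while $\sum_{i\in S_e}i=2^{e-1}\cdot(2^{e-1}-1)$... more carefully, one shows directly $\nu(\sihat_1(S_e))\ge 2e-1$ by a second-order refinement of the pairing: group the four-element orbits $\{i,2^e-i\}$ and expand.

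Concretely, the key steps I would carry out are: (1) establish the polynomial expansion above; (2) for $j\ge3$, note $2^{je}\ge 2^{3e}$ so those terms are automatically fine; (3) for $j=2$, the term is $A^2\,2^{2e}\,\sihat_2(S_e)$, so it suffices that $\nu(\sihat_2(S_e))\ge e-1$ — one should be able to get this, or even $\nu\ge e$, from a pairing argument on $\sihat_2$ (sum over pairs of omitted elements), splitting according to whether the two omitted odd numbers are "partners" $j,2^e-j$ or not; (4) for $j=1$, prove $\nu(\sihat_1(S_e))\ge 2e-1$. For step (4) the cleanest route is the identity $\sihat_1(S_e)=(2^e-1)!!\cdot T$ where $T\equiv\sum_{i\in S_e}i^{-1}\pmod{2^M}$ for any $M$, combined with the observation that in $\Z/2^M$ the involution $i\mapsto 2^e-i$ sends $i^{-1}$ to $-i^{-1}(1-2^e i^{-1})^{-1}=-i^{-1}-2^e i^{-2}-\cdots$, so summing over the $n/2$ pairs $\{i,2^e-i\}$ gives $T\equiv -2^e\sum i^{-2}-2^{2e}(\cdots)\pmod{2^{3e}}$ (the $i^{-1}$ terms cancel in pairs), hence $\nu(T)\ge e$, and then one more iteration — using that $\sum_{i\in S_e}i^{-2}$ is itself even (again pair $i$ with $2^e-i$: $i^{-2}+(2^e-i)^{-2}\equiv 2i^{-2}\pmod{2^e}$, so that's not obviously giving an extra factor; instead observe $\sum_{i\in S_e}i^{-2}\equiv\sum_{\text{odd }i<2^e}i^{-2}$ and pair with the Gauss-type argument, or note this sum is $\equiv\sum i^2$ and $\sum_{\text{odd }i<2^e}i^2$ has a known 2-adic valuation) — yields $\nu(\sihat_1(S_e))\ge 2e-1$.

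The main obstacle I expect is step (4), pinning down $\nu(\sihat_1(S_e))\ge 2e-1$ cleanly; the pairing that gave $\ge e$ in Proposition \ref{weak1} must be pushed one order further, and one must be careful that the "extra" contributions (from $2^e-j$ not being the exact additive inverse of $j$, and from the second-order terms in the inverse expansion) are controlled. An alternative, and probably the route the author takes given that $w$ was introduced precisely for this purpose, is to recognize $\sihat_1(S_e)/((2^e-1)!!)$ as congruent mod $2^{2e}$ to something whose valuation is visibly $\ge 2e-1$ via Corollary \ref{wcor} — essentially, Theorem \ref{hard} for $j=1$ is a mod-$2^{2e}$ refinement of the statement underlying $w$, and Corollary \ref{wcor} already extracted the needed divisibility. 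Once the three terms $j=1,2,\ge3$ are each shown divisible by $2^{3e-1}$, the theorem follows immediately; Theorem \ref{thm2} will then be a matter of applying Theorem \ref{hard} (and its mild generalizations) to the blocks $\odprod(2^{m-1}+1,\ldots)$ and tracking how the modulus $2^{3e-1}$ interacts with the exponent $2^{m-1-d}$.
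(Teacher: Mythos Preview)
Your overall framework---expand $\prod(A2^e+i)-\prod i=\sum_{j\ge1}(A2^e)^j\sihat_j(S_e)$ and bound each term---matches the paper's, but your key valuation claims in steps (3) and (4) are both false, so the argument cannot be completed as you describe. Take $e=3$, so $S_3=\{1,3,5,7\}$: then $\sihat_1(S_3)=105+35+21+15=176=2^4\cdot11$, so $\nu(\sihat_1)=4=2e-2$, not $\ge 2e-1$; and $\sihat_2(S_3)=3+5+7+15+21+35=86=2\cdot43$, so $\nu(\sihat_2)=1=e-2$, not $\ge e-1$. In general the paper proves (Lemmas \ref{lem1} and \ref{lem2}) the exact statements $\sihat_1(S_e)\equiv2^{2e-2}\pmod{2^{2e-1}}$ and $\sihat_2(S_e)\equiv2^{e-2}\pmod{2^{e-1}}$, so both valuations are exactly one less than you need. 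Your attempt to squeeze an extra factor of $2$ out of $\sum_{i\in S_e}i^{-2}$ in step (4), and the appeal to Corollary \ref{wcor}, cannot succeed: $w$ is \emph{odd}, so $((2^e-1)!!-1)/2^e$ carries no further $2$-divisibility.

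The idea you are missing is that for odd $A$ the $j=1$ and $j=2$ terms are each exactly $\equiv2^{3e-2}\pmod{2^{3e-1}}$ and \emph{cancel one another}: modulo $2^{3e-1}$ the sum is $A\cdot2^{3e-2}+A^2\cdot2^{3e-2}=A(A+1)\,2^{3e-2}\equiv0$ since $A(A+1)$ is even. (When $A$ is even, each term is already $0$ mod $2^{3e-1}$.) This is what the paper means by ``the argument slightly different for the two parities of $A$.'' So the correct route is not to push the pairing one order further on $\sihat_1$ alone, but to pin down the \emph{leading bit} of both $\sihat_1$ and $\sihat_2$ and observe the cancellation; the paper does this via an inductive computation of $H_e=\sum\frac{(2^e-1)!!}{(2i+1)(2^e-1-2i)}$ for Lemma \ref{lem1}, and a split $T_e=T_{1,e}+T_{2,e}$ together with a count of quadratic residues mod $2^e$ for Lemma \ref{lem2}.
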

\begin{cor} \label{ABcor} For any integers $A$, $B$, and $j$,
$$\prod_{i\in S_e}(A2^e+i)^{2^j}\equiv\prod_{i\in S_e}(B2^e+i)^{2^j}\pmod{2^{3e-1+j}}.$$
\end{cor}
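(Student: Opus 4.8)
The plan is to deduce Corollary \ref{ABcor} directly from Theorem \ref{hard} by a telescoping argument together with a standard ``squaring a congruence'' step. The key observation is that both products in the corollary are odd, so it suffices to control their ratio.

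First I would reduce to the case $j=0$ by the following general fact: if $x\equiv y\pmod{2^N}$ with $x,y$ odd, then $x^2\equiv y^2\pmod{2^{N+1}}$, since $x^2-y^2=(x-y)(x+y)$ and $x+y$ is even. Iterating $j$ times, $x\equiv y\pmod{2^{3e-1}}$ implies $x^{2^j}\equiv y^{2^j}\pmod{2^{3e-1+j}}$. So it is enough to prove, for all integers $A$ and $B$,
$$\prod_{i\in S_e}(A2^e+i)\equiv\prod_{i\in S_e}(B2^e+i)\pmod{2^{3e-1}}.$$
Both products are odd (each factor is odd), so this is equivalent to the assertion that $\prod_{i\in S_e}(A2^e+i)$ is independent of $A$ mod $2^{3e-1}$. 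But Theorem \ref{hard} says precisely that $\prod_{i\in S_e}(A2^e+i)\equiv\prod_{i\in S_e}i\pmod{2^{3e-1}}$ for every integer $A$ — in particular for $A$ and for $B$ — and the $j=0$ claim follows by transitivity of congruence. Then the squaring step promotes this to the stated modulus $2^{3e-1+j}$.

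Since this excerpt ends with the statement and Theorem \ref{hard} is assumed, the only real content is the squaring lemma, and the main (very minor) obstacle is simply making sure the parity hypotheses are in place so that $x+y$ contributes the extra factor of $2$ at each squaring; here every factor $A2^e+i$ is odd because $i\in S_e$ is odd and $e\ge 1$, so both products and all their powers are odd, and the argument goes through without incident. I would write it as a three-line proof: cite Theorem \ref{hard} twice to get the $j=0$ case, state the elementary squaring lemma, and iterate.

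\begin{proof} Both products are odd, since every factor $A2^e+i$ with $i\in S_e$ is odd. By Theorem \ref{hard} applied with $A$ and with $B$,
$$\prod_{i\in S_e}(A2^e+i)\equiv\prod_{i\in S_e}i\equiv\prod_{i\in S_e}(B2^e+i)\pmod{2^{3e-1}}.$$
If $x\equiv y\pmod{2^N}$ with $x$ and $y$ odd, then $x^2-y^2=(x-y)(x+y)$ is divisible by $2^{N+1}$, since $x+y$ is even. Applying this $j$ times, starting from $x=\prod_{i\in S_e}(A2^e+i)$ and $y=\prod_{i\in S_e}(B2^e+i)$ with $N=3e-1$, yields
$$\prod_{i\in S_e}(A2^e+i)^{2^j}\equiv\prod_{i\in S_e}(B2^e+i)^{2^j}\pmod{2^{3e-1+j}}.$$
\end{proof}
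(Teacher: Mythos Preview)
Your proof is correct and essentially identical to the paper's: apply Theorem \ref{hard} twice to reduce both sides to $\prod_{i\in S_e} i \pmod{2^{3e-1}}$, then iterate the elementary squaring lemma $j$ times. The paper phrases the squaring step as ``if $\alpha\equiv\beta\pmod{2t}$ then $\alpha^2\equiv\beta^2\pmod{4t}$,'' which is the same observation.
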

\begin{proof} It is elementary that if $\a\equiv\b$ mod $2t$, then $\a^2\equiv\b^2$ mod
$4t$. We apply this iteratively to Theorem \ref{hard}, and then both expressions in the corollary are congruent to $\prod i^{2^j}$.\end{proof}

\begin{proof}[Proof of Theorem \ref{thm2}.]
We write the conjectured congruences in succession, beginning
\begin{align*}
\odprod(2^{m-1}+1,2^m-1)&\equiv \odprod(2^{m-1}+1,2^{m-1}+2^{m-2}-1)^2\quad (2^{3m-7})\\
\odprod(2^{m-1}+1,2^{m-1}+2^{m-2}-1)^2&\equiv \odprod(2^{m-1}+1,2^{m-1}+2^{m-3}-1)^{2^2}\quad(2^{3m-9})\end{align*}
with arbitrary entry
$$\odprod(2^{m-1}+1,2^{m-1}+2^{d+1}-1)^{2^{m-2-d}}\equiv\odprod(2^{m-1}+1,2^{m-1}+2^d-1)^{2^{m-1-d}}\quad(2^{2d+m-3}).$$
After canceling, this becomes
$$\odprod(2^{m-1}+2^d+1,2^{m-1}+2^{d+1}-1)^{2^{m-2-d}}\equiv\odprod(2^{m-1}+1,2^{m-1}+2^d-1)^{2^{m-2-d}}\quad(2^{2d+m-3}).$$
We can restate this as
$$\prod_{i\in S_d}(2^{m-1}+2^d+i)^{2^{m-2-d}}\equiv\prod_{i\in S_d}(2^{m-1}+i)^{2^{m-2-d}}\pmod{2^{2d+m-3}},$$
and this is a consequence of Corollary \ref{ABcor}.
\end{proof}
We will prove the following two lemmas, from which Theorem \ref{hard} follows easily.
\begin{lem}\label{lem1} $\sihat_1(S_e)\equiv 2^{2e-2}\pmod{2^{2e-1}}$.\end{lem}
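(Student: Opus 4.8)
The plan is to reduce the computation of $\sihat_1(S_e)=\si_{n-1}(S_e)$, where $n=|S_e|=2^{e-1}$, to a congruence for a sum of squares. Set $P=\prod_{i\in S_e}i$, which is odd by Lemma \ref{Gauss}. The term of $\sihat_1(S_e)$ that omits a given $j$ is $\prod_{i\ne j}i=P/j$ (interpreted in the $2$-adic integers, since $j$ is odd), so $\sihat_1(S_e)=P\sum_{i\in S_e}i^{-1}$, and it suffices to control $\sum_{i\in S_e}i^{-1}$ modulo a suitable power of $2$.

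For this I would use the fixed-point-free involution $i\mapsto 2^e-i$ of $S_e$ (valid for $e\ge2$, since $i=2^e-i$ would force the even $i=2^{e-1}$), which groups $S_e$ into $2^{e-2}$ pairs whose representatives below $2^{e-1}$ are exactly the elements of $S_{e-1}$. Expanding $(2^e-i)^{-1}=-i^{-1}\bigl(1-2^ei^{-1}\bigr)^{-1}$ as a geometric series gives $i^{-1}+(2^e-i)^{-1}\equiv-2^ei^{-2}\pmod{2^{2e}}$, hence
$$\sum_{i\in S_e}i^{-1}\equiv-2^e\sum_{i\in S_{e-1}}i^{-2}\pmod{2^{2e}}.$$
It then remains to evaluate $\Sigma:=\sum_{i\in S_{e-1}}i^{-2}$ modulo $2^{e-1}$. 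Reducing each $i^{-2}$ modulo $2^{e-1}$ and using that $i\mapsto i^{-1}$ permutes the odd residues modulo $2^{e-1}$, we obtain $\Sigma\equiv\sum_{k\in S_{e-1}}k^2\pmod{2^{e-1}}$, and the right-hand side is the ordinary integer $\sum_{\ell=1}^{2^{e-2}}(2\ell-1)^2=\tfrac{2^{e-2}(2^{2e-2}-1)}{3}$. Since $2^{2e-2}-1$ is odd and divisible by $3$, this is $2^{e-2}$ times an odd integer, so $\Sigma\equiv2^{e-2}\pmod{2^{e-1}}$. Combining, $\sihat_1(S_e)\equiv-2^eP\Sigma\equiv-2^{2e-2}P\equiv-2^{2e-2}\equiv2^{2e-2}\pmod{2^{2e-1}}$, where the final two steps use only that $P$ is odd.

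The proof is short, so there is no serious obstacle; the one thing needing care is the $2$-adic bookkeeping — checking that the geometric-series error terms remain multiples of $2^{2e}$ after summing over all $2^{e-2}$ pairs and after multiplying through by the unit $P$, and noting that only mod-$2^{e-1}$ information about $\Sigma$ is needed (and available) to determine $\sihat_1(S_e)$ modulo $2^{2e-1}$. The one mildly substantive point is recognizing $\sum_{k\in S_{e-1}}k^2$ in closed form and observing that its $2$-adic valuation is exactly $e-2$; I would sanity-check the normalization against $\sihat_1(S_2)=4$ and $\sihat_1(S_3)=176$.
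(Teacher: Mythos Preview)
Your argument is correct. Both your proof and the paper's begin identically: write $\sihat_1(S_e)$ as $P\sum_{i\in S_e}i^{-1}$ (the paper keeps this in integer form as $\sum (2^e-1)!!/i$) and use the involution $i\leftrightarrow 2^e-i$ to extract a factor of $2^e$, reducing the claim to showing that the resulting sum over representatives $i\in S_{e-1}$ has $2$-adic valuation exactly $e-2$.

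Where the two diverge is in that final step. The paper sets $H_e=\sum_{i}\dfrac{(2^e-1)!!}{(2i+1)(2^e-1-2i)}$ and proves $H_e\equiv 2^{e-2}\pmod{2^{e-1}}$ by induction on $e$, reducing $H_e$ mod $2^{e-1}$ to a multiple of $H_{e-1}$. You instead observe that, modulo $2^{e-1}$, the sum $\sum_{i\in S_{e-1}} i^{-2}$ coincides with $\sum_{k\in S_{e-1}} k^{2}$ because $i\mapsto i^{-1}$ permutes the odd residues mod $2^{e-1}$, and then evaluate $\sum_{\ell=1}^{2^{e-2}}(2\ell-1)^2=\tfrac{2^{e-2}(2^{2e-2}-1)}{3}$ in closed form. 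This buys you a non-inductive, one-line finish; the paper's induction, by contrast, never needs the explicit sum-of-odd-squares identity. (The paper in fact acknowledges an alternate proof due to Granville, so a second route was anticipated.) Your bookkeeping on the error terms is fine: each pair contributes an error in $2^{2e}\Z_2$, and multiplying by the unit $P$ preserves congruences mod $2^{2e-1}$; only $\Sigma$ mod $2^{e-1}$ is needed, exactly as you note.
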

\begin{lem}\label{lem2} $\sihat_2(S_e)\equiv2^{e-2}\pmod{2^{e-1}}$.\end{lem}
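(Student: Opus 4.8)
The plan is to reduce Lemma \ref{lem2} to a residue computation for the squares of the units modulo $2^e$, via the Newton-type identity linking $\sihat_1$ and $\sihat_2$ together with the (already needed) Lemma \ref{lem1}.

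Set $P=\prod_{i\in S_e}i=(2^e-1)!!$, an odd integer, and for $a\in S_e$ write $q_a=P/a$, again an odd integer (a product of distinct elements of $S_e$). Unwinding $\sihat_k(S)=\si_{n-k}(S)$ into $\sihat_k(S)=\sum_{|T|=k}\prod_{s\in S\setminus T}s$ gives $\sihat_1(S_e)=\sum_{a\in S_e}q_a$ and $P\,\sihat_2(S_e)=\sum_{\{a,b\}\subseteq S_e}q_aq_b$, so expanding $(\sum_a q_a)^2$ yields the identity
$$2P\,\sihat_2(S_e)=\Big(\sum_{a\in S_e}q_a\Big)^2-\sum_{a\in S_e}q_a^2 .$$
Since $P$ is odd, this reduces the lemma to showing that the right-hand side has $2$-adic valuation exactly $e-1$; indeed an integer with $\nu$ exactly $e-2$ is automatically $\equiv 2^{e-2}\pmod{2^{e-1}}$, which is the assertion.

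For the first summand, Lemma \ref{lem1} gives $\nu\big(\sum_a q_a\big)=\nu(\sihat_1(S_e))=2e-2$, hence $\nu\big((\sum_a q_a)^2\big)=4e-4$, which is strictly greater than $e-1$ for $e\ge 2$. For the second summand, I would observe that modulo $2^e$ the assignment $a\mapsto q_a\equiv Pa^{-1}$ permutes the unit group $(\Z/2^e)^*$, so that $\sum_{a\in S_e}q_a^2\equiv\sum_{u\in(\Z/2^e)^*}u^2\pmod{2^e}$. Taking the representatives $1,3,\dots,2^e-1$ and using the closed form $1^2+3^2+\cdots+(2N-1)^2=\tfrac{N(4N^2-1)}{3}$ with $N=2^{e-1}$, one gets $\sum_u u^2\equiv 2^{e-1}\cdot\frac{2^{2e}-1}{3}\pmod{2^e}$; since $3\mid 2^{2e}-1$ with odd quotient, this is $\equiv 2^{e-1}\pmod{2^e}$, so $\nu\big(\sum_a q_a^2\big)=e-1$.

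Combining, the difference $(\sum_a q_a)^2-\sum_a q_a^2$ has valuation $\min(4e-4,\,e-1)=e-1$ (the two valuations being distinct for $e\ge 2$), and therefore $\nu(\sihat_2(S_e))=e-2$, as needed. The only step that is not purely formal is the evaluation of $\sum_{u\in(\Z/2^e)^*}u^2$ modulo $2^e$, which is routine once one has the formula for the sum of the first $N$ odd squares (alternatively, one may pair each unit $u$ with $u+2^{e-1}$, using $(u+2^{e-1})^2\equiv u^2\pmod{2^e}$ for $e\ge 2$). I anticipate no real obstacle; the one point needing care is that the $\sihat_1^2$ term must have valuation strictly larger than $e-1$, which is precisely where the exact value $2e-2$ from Lemma \ref{lem1} enters — though any bound $\nu(\sihat_1(S_e))\ge\lceil e/2\rceil$ would already suffice.
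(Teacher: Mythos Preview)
Your argument is correct, and it takes a genuinely different route from the paper's.

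The paper splits $\sihat_2(S_e)=T_{1,e}+T_{2,e}$ according to whether the omitted pair $\{a,b\}$ satisfies $a\equiv b\pmod{2^{e-1}}$. The diagonal piece $T_{2,e}$ is identified with $\sum_{a\in S_{e-1}}((2^{e-1}-1)!!)^2/a^2$ and shown to be $\equiv 2^{e-2}\pmod{2^{e-1}}$ via a separate Proposition~\ref{sqprop}, itself proved through two auxiliary lemmas about the multiset of squares in $(\Z/2^e)^*$. The off-diagonal piece $T_{1,e}$ is then killed by an induction on $e$. In particular, the paper's proof of Lemma~\ref{lem2} does not invoke Lemma~\ref{lem1}.

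Your approach replaces this two-part decomposition by the single Newton identity $2P\,\sihat_2(S_e)=\sihat_1(S_e)^2-\sum_a q_a^2$. The cross-term contribution that the paper handles inductively is now absorbed into $\sihat_1(S_e)^2$, whose valuation $4e-4$ you read off directly from Lemma~\ref{lem1}; and the diagonal sum $\sum_a q_a^2=\sum_a((2^e-1)!!/a)^2$---the very quantity in Proposition~\ref{sqprop}---you evaluate mod $2^e$ by the slicker observation that $a\mapsto Pa^{-1}$ permutes $(\Z/2^e)^*$, reducing to the closed form $\sum_{k=1}^{2^{e-1}}(2k-1)^2=2^{e-1}(2^{2e}-1)/3$. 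This is shorter and more elementary: it dispenses with Lemmas~\ref{8}, \ref{9}, Proposition~\ref{sqprop}, and the induction, at the cost of making Lemma~\ref{lem2} logically downstream of Lemma~\ref{lem1} (harmless here, since both are only used together in Theorem~\ref{hard}).
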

\begin{proof}[Proof of Theorem \ref{hard}.]
$$\prod_{i\in S_e}(A2^e+i)-\prod_{i\in S_e}i=\sum_{j>0}(A2^e)^j\sihat_j(S_e)\equiv0\pmod{2^{3e-1}}$$
by Lemmas \ref{lem1} and \ref{lem2}, with the argument slightly different for the two parities of $A$.
\end{proof}
\begin{proof}[Proof of Lemma \ref{lem1}.]
$$\sihat_1(S_e)=\sum_{i=1}^{2^{e-2}-1}\biggl(\frac{(2^e-1)!!}{2i+1}+\frac{(2^e-1)!!}{2^e-1-2i}\biggr)=2^e\sum_{i=1}^{2^{e-2}-1}\frac{(2^e-1)!!}{(2i+1)(2^e-1-2i)}.$$
Let $H_e=\dsum_{i=1}^{2^{e-2}-1}\frac{(2^e-1)!!}{(2i+1)(2^e-1-2i)}$. We will prove by induction that $H_e\equiv2^{e-2}$ mod $2^{e-1}$, which implies the lemma.

The claim is true for $e=2$. Assume it true for $e-1$. Mod $2^{e-1}$,
$$H_e\equiv\sum_{i=0}^{2^{e-2}-1}\frac{((2^{e-1}-1)!!)^2}{(2i+1)(2^{e-1}-2i-1)}.$$
The summands for $i$ and $2^{e-2}-1-i$ are equal. Thus $H_e\equiv 2(2^{e-1}-1)!!H_{e-1}$ mod $2^{e-1}$. By the induction hypothesis, we obtain $H_e\equiv2^{e-2}$ mod $2^{e-1}$, as desired.
\end{proof}

\noindent We thank Andrew Granville for providing an alternate proof of Lemma \ref{lem1}.

The following results will be used in the proof of Lemma \ref{lem2}.
\begin{lem}\label{8} Of the $2^{e-1}$ numbers $i^2$ mod $2^e$, $i\in S_e$, there are exactly four having each of the $2^{e-3}$ values less than $2^e$ and $\equiv1$ mod $8$.\end{lem}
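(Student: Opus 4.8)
The plan is to understand the squaring map $S_e \to (\Z/2^e)^\times$, $i \mapsto i^2 \bmod 2^e$, as a group homomorphism on the unit group. Recall that for $e \ge 3$, $(\Z/2^e)^\times \cong \Z/2 \times \Z/2^{e-2}$, generated by $-1$ and $5$ (or any element of order $2^{e-2}$). The squaring endomorphism on this group kills the $\Z/2$ factor (since $(-1)^2 = 1$) and acts as multiplication by $2$ on the cyclic $\Z/2^{e-2}$ factor. Hence its image is the subgroup $\{1\} \times 2\Z/2^{e-2}$, which has order $2^{e-3}$, and every element of the image has exactly $|{\ker}| = 4$ preimages among the $2^{e-1}$ units mod $2^e$. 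First I would record that the image consists precisely of the squares in $(\Z/2^e)^\times$, and identify those squares.

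The second step is to identify the image with the set of residues that are $\equiv 1 \bmod 8$. One inclusion is immediate: every odd square is $\equiv 1 \bmod 8$, so the image lies in $\{x \bmod 2^e : x \equiv 1 \bmod 8\}$, a set of cardinality $2^{e-3}$. Since the image also has cardinality $2^{e-3}$ by the kernel count above, the two sets coincide. (Alternatively, one can cite Lemma \ref{Gauss}-style elementary facts, but the group-theoretic count is cleanest.) Combining the two steps: each of the $2^{e-3}$ residues less than $2^e$ that are $\equiv 1 \bmod 8$ is hit, and is hit by exactly four elements $i \in S_e$, which is the assertion of the lemma.

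I do not expect a serious obstacle here; the only points requiring a little care are the standard structure theorem for $(\Z/2^e)^\times$ when $e \ge 3$ (the small cases $e = 3$ being checkable by hand, where $S_e = \{1,3,5,7\}$ all square to $1 \bmod 8$, giving four preimages of the single residue $1$), and making sure the four-to-one statement is phrased for $i$ ranging over the representatives in $S_e$ rather than over all of $(\Z/2^e)^\times$ — but these coincide, since $S_e$ is exactly a set of representatives for the odd residues mod $2^e$. The mild subtlety worth a sentence is that "the $2^{e-1}$ numbers $i^2 \bmod 2^e$" in the statement is a multiset: the lemma is really counting, with multiplicity, the fibers of the squaring map, which is exactly what the kernel-size argument delivers.
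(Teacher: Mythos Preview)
Your proposal is correct and follows essentially the same counting strategy as the paper: identify the image of the squaring map on $S_e$ with the residues $\equiv 1\pmod 8$, show it is four-to-one, and conclude by cardinality. The only difference is how the four-to-one property is established: you invoke the structure theorem $(\Z/2^e)^\times\cong\Z/2\times\Z/2^{e-2}$ to compute $|\ker|=4$, whereas the paper exhibits the four preimages explicitly as $i$, $2^{e-1}-i$, $2^{e-1}+i$, $2^e-i$ for odd $i<2^{e-1}$, which is slightly more elementary but amounts to the same thing.
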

\begin{proof} Each of the  $2^{e-3}$ numbers is a quadratic residue, and so must occur as $i^2$ for some $i\in S_e$. It will occur in four ways since
for odd $i<2^{e-1}$, $i$, $2^{e-1}-i$, $i+2^{e-1}$, and $2^e-i$ are distinct numbers with the same square mod $2^e$. Thus the claimed partitioning must hold.
\end{proof}
\begin{lem}\label{9} For $e\ge3$,
$$\sihat_1(1,9,\ldots,2^e-7,1,9,\ldots,2^e-7,1,9,\ldots,2^e-7,1,9,\ldots,2^e-7)\equiv 2^{e-1}\quad (2^e).$$
\end{lem}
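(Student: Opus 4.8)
The plan is to use the fact that every entry of the list is odd, hence a unit modulo $2^e$, so that $\sihat_1$ can be rewritten as a single sum of inverses. Write $V=\{1,9,\ldots,2^e-7\}$, the set of the $n:=2^{e-3}$ residues in $[1,2^e)$ that are $\equiv1$ mod $8$; the list in the lemma is the multiset $M$ consisting of four copies of $V$, so $|M|=2^{e-1}$. Since $\sihat_1(M)=\sum_k\prod_{j\ne k}M_j$ is a polynomial with integer coefficients in its entries and each $M_j$ is invertible mod $2^e$, we have, with $P:=\prod_j M_j=\bigl(\prod_{v\in V}v\bigr)^4$ (which is odd),
$$\sihat_1(M)\equiv P\sum_k M_k^{-1}=P\cdot 4\sum_{v\in V}v^{-1}\pmod{2^e},$$
where $v^{-1}$ denotes the inverse of $v$ in $\Z/2^e$.

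The first step is to observe that inversion permutes $V$ modulo $2^e$: for $e\ge3$, $vv^{-1}\equiv1\pmod 8$ together with $v\equiv1\pmod 8$ forces $v^{-1}\equiv1\pmod 8$, so $v\mapsto(v^{-1}\bmod 2^e)$ maps the finite set $V$ injectively, hence bijectively, into itself. Therefore $\sum_{v\in V}v^{-1}\equiv\sum_{v\in V}v\pmod{2^e}$, and it remains to evaluate $4\sum_{v\in V}v$. This is an elementary sum of an arithmetic progression: $\sum_{v\in V}v=\sum_{j=0}^{n-1}(8j+1)=4n^2-3n=2^{2e-4}-3\cdot 2^{e-3}$, so $4\sum_{v\in V}v=2^{2e-2}-3\cdot2^{e-1}\equiv 2^{e-1}\pmod{2^e}$ (using $2e-2\ge e$ and $3\cdot2^{e-1}\equiv-2^{e-1}$). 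Combining with the previous displayed congruence gives $\sihat_1(M)\equiv P\cdot 2^{e-1}\equiv 2^{e-1}\pmod{2^e}$, the last step because $P$ is odd.

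There is no serious obstacle here; the one point that repays a moment's care is the reduction $\sihat_1(M)\equiv P\sum_k M_k^{-1}\pmod{2^e}$, which is legitimate precisely because all entries are units mod $2^e$, together with the small bookkeeping of powers of $2$ in the final reduction. (Alternatively, by Lemma \ref{8} the multiset $M$ agrees mod $2^e$ with $\{i^2\bmod 2^e:i\in S_e\}$, and one can run the same computation using $\sum_{i\in S_e}i^{-2}\equiv\sum_{i\in S_e}i^2\pmod{2^e}$; but summing directly over $V$ is shorter.)
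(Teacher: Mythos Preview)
Your proof is correct and takes a genuinely different route from the paper's. The paper proceeds by induction with step size~$2$: it factors $\sihat_1(M)=4\bigl(\prod_{v\in V}v\bigr)^3\cdot\sihat_1(V)$, then reduces $\sihat_1(V)$ modulo $2^{e-2}$, where the list $V=\{1,9,\ldots,2^e-7\}$ collapses to four copies of $\{1,9,\ldots,2^{e-2}-7\}$, invoking the lemma for $e-2$; this is why both base cases $e=3,4$ are checked. Your argument is instead direct: the key observation that inversion permutes the residues $\equiv1\pmod8$ in $(\Z/2^e)^\times$ converts $\sum_{v\in V}v^{-1}$ into the explicit arithmetic-progression sum $\sum_{v\in V}v$, after which a one-line computation finishes. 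Your approach is shorter and avoids the inductive bookkeeping; the paper's approach, on the other hand, fits naturally with the inductive structure used elsewhere (e.g.\ in the proofs of Lemmas~\ref{lem1} and~\ref{lem2}) and does not require isolating the inversion symmetry.
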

\begin{proof}
The proof is by induction. The claim is true for $e=3$ and $4$. [\![$\sihat_1(1,1,1,1)=4$ and $\sihat_1(1,9,1,9,1,9,1,9)=4\cdot9^4+4\cdot9^3=4\cdot9^3\cdot10$.]\!]
For arbitrary $e$, our expression equals $4\cdot9^3\cdots(2^e-7)^3\cdot\sihat_1(1,9,\ldots2^e-7)$. Because of the 4,
we can consider $\sihat_1(1,9,\ldots,2^e-7)$ mod $2^{e-2}$, so we obtain an odd multiple of
$4\cdot\Si$ with
$$\Si=\sihat_1(1,9,\ldots,2^{e-2}-7,1,9,\ldots,2^{e-2}-7,1,9,\ldots,2^{e-2}-7,1,9,\ldots,2^{e-2}-7).$$
By the induction hypothesis, $\Si\equiv2^{e-3}$ mod $2^{e-2}$, and so our desired expression is $\equiv2^{e-1}$ mod $2^e$.

\end{proof}
\begin{prop}\label{sqprop} $\dsum_{i\in S_e}\dfrac{((2^e-1)!!)^2}{i^2}\equiv 2^{e-1}\pmod{2^e}$.\end{prop}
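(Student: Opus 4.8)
The plan is to exploit Lemma \ref{8}, which tells us exactly how the squares $i^2 \bmod 2^e$ for $i \in S_e$ are distributed: each residue class $\equiv 1 \pmod 8$ and $< 2^e$ is hit exactly four times. Since $(2^e-1)!!$ is odd, the expression $\sum_{i \in S_e} ((2^e-1)!!)^2/i^2$ makes sense as an odd-denominator rational (indeed $2$-adic integer), and its value mod $2^e$ depends only on the values $i^{-2} \bmod 2^e$. First I would note that as $i$ ranges over $S_e$, the inverse squares $i^{-2} \bmod 2^e$ range over exactly the same multiset as the squares $i^2 \bmod 2^e$, namely four copies of each residue $\equiv 1 \pmod 8$ below $2^e$; this is because inversion is a bijection on the group of units fixing the subgroup of squares. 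So mod $2^e$,
$$\sum_{i \in S_e} \frac{((2^e-1)!!)^2}{i^2} \equiv ((2^e-1)!!)^2 \cdot 4 \sum_{\substack{1 \le k < 2^e \\ k \equiv 1 (8)}} k.$$

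Next I would evaluate the inner sum $4\sum_{k \equiv 1 (8), \, k < 2^e} k$ modulo $2^e$. The numbers $k \equiv 1 \pmod 8$ with $1 \le k < 2^e$ are $1, 9, 17, \ldots, 2^e - 7$, of which there are $2^{e-3}$. This is precisely the list appearing in Lemma \ref{9}, except that Lemma \ref{9} concerns $\sihat_1$ of four concatenated copies of $1, 9, \ldots, 2^e-7$ rather than four times the ordinary sum. However, $\sihat_1$ of a multiset is just the sum of its elements (the top elementary symmetric function, i.e.\ $\sigma_1$ up to the $\sihat$ reindexing convention — I should double-check that $\sihat_1(S) = \sigma_{|S|-1}(S)$ is the elementary symmetric polynomial of degree $|S|-1$, but for a list of numbers the relevant "sum" interpretation is what Lemma \ref{9}'s proof actually computes, since there $\sihat_1(1,1,1,1) = 4$ is the sum). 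Thus Lemma \ref{9} gives exactly $4\sum_{k \equiv 1(8), \, k<2^e} k \equiv 2^{e-1} \pmod{2^e}$.

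Finally, combining these: mod $2^e$ we get $((2^e-1)!!)^2 \cdot 2^{e-1}$, and since $((2^e-1)!!)^2$ is the square of an odd number, it is $\equiv 1 \pmod 8$, hence $((2^e-1)!!)^2 \cdot 2^{e-1} \equiv 2^{e-1} \pmod{2^e}$ (we only need the odd factor mod $2$, as it multiplies $2^{e-1}$). This yields the claimed congruence. The main obstacle I anticipate is purely bookkeeping: making sure that "four copies of each residue $\equiv 1 \pmod 8$" is the correct multiset for the inverse squares (not just the squares) and that Lemma \ref{9} is being quoted for the right quantity — i.e.\ reconciling the $\sihat_1$ notation with the plain sum $1 + 9 + \cdots + (2^e-7)$. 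Once that identification is nailed down, the rest is a one-line assembly. An alternative, should the $\sihat_1$-vs-sum identification prove awkward to invoke directly, would be to prove the inner sum $\equiv 2^{e-2} \pmod{2^{e-1}}$ by a direct induction mirroring the proof of Lemma \ref{9}, or simply by summing the arithmetic progression $\sum_{j=0}^{2^{e-3}-1}(8j+1) = 2^{e-3} + 8\binom{2^{e-3}}{2} = 2^{e-3} + 2^{e-2}(2^{e-3}-1)2^{e-2}$ and reducing, but leaning on Lemma \ref{9} as stated is cleaner.
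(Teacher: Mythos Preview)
Your overall strategy is sound, but the attempted invocation of Lemma~\ref{9} rests on a misreading of the notation. By the paper's definition $\sihat_1(S)=\si_{|S|-1}(S)$, the elementary symmetric polynomial of codegree one, i.e.\ $\sihat_1(a_1,\ldots,a_n)=\sum_j\prod_{i\ne j}a_i$; it is \emph{not} the ordinary sum $\sum a_i$. The example $\sihat_1(1,1,1,1)=4$ you cite is a coincidence (every $(n-1)$-fold product equals $1$); the second bracketed example in the proof of Lemma~\ref{9}, $\sihat_1(1,9,\ldots,1,9)=4\cdot9^4+4\cdot9^3$, already shows this. So Lemma~\ref{9} is not a statement about $4\sum_{k\equiv1(8)}k$.

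The paper's one-line proof exploits exactly the correct meaning of $\sihat_1$: since $((2^e-1)!!)^2=\prod_{i\in S_e}i^2$, one has
\[
\sum_{i\in S_e}\frac{((2^e-1)!!)^2}{i^2}
=\sum_{i\in S_e}\prod_{\substack{j\in S_e\\ j\ne i}}j^2
=\sihat_1\bigl(\{i^2:i\in S_e\}\bigr),
\]
and Lemma~\ref{8} identifies this multiset mod $2^e$ with the four copies of $1,9,\ldots,2^e-7$ appearing in Lemma~\ref{9}. No inversion bijection and no factoring out of $((2^e-1)!!)^2$ are needed.

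That said, your route becomes a correct and genuinely different proof once you drop the appeal to Lemma~\ref{9} and instead evaluate the arithmetic progression directly, as you propose at the end:
\[
4\sum_{j=0}^{2^{e-3}-1}(8j+1)=4\cdot 2^{e-3}+16\cdot 2^{e-3}(2^{e-3}-1)=2^{e-1}(2^{e-1}-3)\equiv 2^{e-1}\pmod{2^e}
\]
(your displayed expression for this sum has an arithmetic slip), after which multiplying by the odd number $((2^e-1)!!)^2$ preserves the congruence. This version bypasses Lemma~\ref{9} entirely and is in fact more elementary than the paper's argument, at the price of the extra inversion-bijection step and the need to track the outer factor $((2^e-1)!!)^2$.
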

\begin{proof} By Lemma \ref{8},  it equals the expression in Lemma \ref{9}.\end{proof}

\begin{proof}[Proof of Lemma \ref{lem2}.]

 Let $D_e=\{(a,b)\in S_e\times S_e:\ a<b\}$. Note that $\sihat_2(S_e)=\dsum_{(a,b)\in D_e}\dfrac{(2^e-1)!!}{a\cdot b}$, denoted  by $T_e$. Write $T_e=T_{1,e}+T_{2,e}$, where
 $$T_{1,e}=\sum_{\substack{(a,b)\in D_e\\ a\not\equiv b\ (2^{e-1})}}\frac{(2^e-1)!!}{a\cdot b}\quad\text{and}\quad
 T_{2,e}=\sum_{\substack{(a,b)\in D_e\\ a\equiv b\ (2^{e-1})}}\frac{(2^e-1)!!}{a\cdot b}.$$
Each summand of $T_{2,e}$ corresponds to a unique element of $S_{e-1}$, and so, mod $2^{e-1}$,
$$T_{2,e}\equiv\sum_{a\in S_{e-1}}\frac{((2^{e-1}-1)!!)^2}{a^2}\equiv2^{e-2}\pmod{2^{e-1}}$$
by Proposition \ref{sqprop}.

We will prove $T_{1,e}\equiv0$ mod $2^{e-1}$ by induction. It is true when $e=3$ as we obtain four summands, each with denominator 3. Assume validity for $e-1$.
Every element of $D_{e-1}$ corresponds to four  summands of $T_{1,e}$ which are equal mod $2^{e-1}$.
 We obtain, mod $2^{e-1}$,
 $$T_{1,e}\equiv 4\sum_{(a,b)\in D_{e-1}}\frac{((2^{e-1}-1)!!)^2}{a\cdot b}=4(2^{e-1}-1)!!(T_{1,e-1}+T_{2,e-1})\equiv0\quad(2^{e-1}),$$
 using the induction hypothesis for $4T_{1,e-1}$ and the already-proved result for $4T_{2,e-1}$.
\end{proof}
\def\line{\rule{.6in}{.6pt}}

\end{document}